\newtheorem{theorem}{Theorem}[section]
\newtheorem{lemma}[theorem]{Lemma}
\newtheorem{proposition}[theorem]{Proposition}
\newtheorem{corollary}[theorem]{Corollary}
\theoremstyle{definition}
\newtheorem{definition}[theorem]{Definition}
\newtheorem{example}[theorem]{Example}
\theoremstyle{remark}
\newtheorem{remark}[theorem]{Remark}
\numberwithin{equation}{section}
\newenvironment{spmatrix}{\left ( \begin{smallmatrix}} {\end{smallmatrix}\right )}
\newcommand{\placket}[1]{\left({#1}\right)}
\newcommand{\vlacket}[1]{\left\{{#1}\right\}}
\newcommand{\abslr}[1]{\left|{#1}\right|}
\newcommand{\imaginary}[0]{\sqrt{-1}\,}
\newcommand{\centerbox}[2]{\parbox{#1}{\centering #2}}
\begin{document}

\title[Self-affinities of planar curves]{Self-affinities of planar curves: towards unified description of aesthetic curves}

\author{Shun Kumagai}
\address{Institute of Mathematics-for-Industry, Kyushu University, 744
Motooka, 819-0395, Fukuoka, Japan.}
\email{s-kumagai@imi.kyushu-u.ac.jp}

\author{Kenji Kajiwara}
\address{Institute of Mathematics-for-Industry, Kyushu University, 744
Motooka, 819-0395, Fukuoka, Japan.}
\email{kaji@imi.kyushu-u.ac.jp}

\subjclass[2020]{53A15, 93B51, 65D18}



\date{June, 22, 2024}


\keywords{industrial design, planar curves, self-affinity, quadratic curves, log-aesthetic curves, equiaffine geometry, similarity geometry.}

\begin{abstract}
In this paper, we consider the self-affinity of planar curves. 
It is regarded as an important property to characterize the log-aesthetic curves which have been studied as reference curves or guidelines for designing aesthetic shapes in CAD systems. 
We reformulate the two different self-affinities proposed in the development of log-aesthetic curves. 
We give a rigorous proof that one self-affinity actually characterizes log-aesthetic curves, while another one characterizes parabolas. 
We then propose a new self-affinity which, in equiaffine geometry, characterizes the constant curvature curves (the quadratic curves). 
It integrates the two self-affinities, by which constant curvature curves in similarity and equiaffine geometries are characterized in a unified manner. 
\end{abstract}

\maketitle



\section{Introduction}
In the field of computer aided design (CAD), control over the \textit{visual language} \cite{Kepes} such as the impressions received from the components and outlines of a shape is highly dependent on the expertise of designers. 
Using spline curves such as B\'{e}zier curves, B-Spline curves, and non-uniform rational B-spline (NURBS) curves, one can design shapes interactively in a way that is suitable for generating in CAD systems \cite{CAGD}.
To design visually desirable shapes in CAD systems, these basic tools require some sort of reference curves or guidelines.

In 1995, inspired by the analysis of curves appearing in the shapes of designed cars, Harada, Mori, and Sugiyama \cite{Harada1995} suggested that a sort of self-affinity (the Harada self-affinity, the HSA) is important to characterize \textit{aesthetic} shapes. 
They formulated it by the linearity of the logarithmic curvature histogram (LCH, also known as logarithmic distribution diagram of curvature, LDDC). 
Curves such as logarithmic spirals and clothoids, which have been classically considered beautiful, 
give linear LCHs indeed. 
Harada, Yoshimoto, and Moriyama \cite{Harada1999} classified planar curves into five types according to visual language in terms of LCH gradients. 
Curves sampled from several artifacts and natural structures were investigated using this classification \cite{Harada2001,Inoue2006}. 

In 2005, Miura \cite{Miura2005} reformulated the above self-affinity (the Miura self-affinity, the MSA) using the logarithmic curvature graph (LCG) \cite{Gobi2014} which is the continuous limit of LCH. 
He introduced log-aesthetic curves (LACs) \cite{Miura2006} as a class of curves whose LCG is a line of prescribed slope. 
It is generated by applying the fine-tuning method \cite{MiuraChengWang} to clothoids, by which they are deformed to curves with linear LCGs whose gradients are arbitrarily controlled. 
LACs have been studied as reference curves for designing shapes in CAD systems \cite{Yoshida2006,Wang2021,Graiff2023,Tsuchie2024}.
As other important characterizations, LACs are known as critical points of the fairing energy functional \cite{Inoguchi2018} and invariant curves of integrable evolution in similarity geometry \cite{Inoguchi2023}. 


In this paper, we consider characterizations of curves in terms of self-affinities which have not been dealt with mathematically.
We present rigorous proof of the claim \cite{Miura2006} that the MSA characterizes LACs. 
On the other hand, the HSA has not been studied well. 
Despite Harada's original discussion on the relationship between the HSA and the linearity of LCG, we show that it is not the case and that the HSA actually characterizes \textit{parabolas}. 
We recall that parabolas are zero curvature curves in \textit{equiaffine geomerty}, while special LACs, circles and logarithmic spirals are the zero curvature curves and the constant curvature curves, respectively, in \textit{similarity geometry}. 
In view of this, we propose a new \textit{extendable self-affinity (the ESA)} that integrates the HSA and the MSA in terms of geometries in Klein's Erlangen program \cite{Equiaffine}. 
The main theorems are stated as follows. 
\begin{theorem}(Theorem \ref{MSALAC})\label{main1}    A curve possesses the MSA if and only if it is either a circle, a line, or a LAC. 
\end{theorem}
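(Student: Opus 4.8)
The plan is to establish an equivalence between "possessing the MSA" and "satisfying a particular ODE for curvature," then solve that ODE and identify the solution families as circles, lines, and LACs. The Miura self-affinity (MSA) is defined via the logarithmic curvature graph (LCG), which plots $\log(\text{radius of curvature})$ against $\log$ of the arc length of the curvature (or an analogous quantity in similarity geometry). Since the MSA asks that this LCG be a straight line of prescribed slope, the natural first step is to translate the geometric/graphical condition into an analytic one: write down the LCG as a parametric curve in terms of the radius of curvature $\rho(s)=1/\kappa(s)$ and its derivatives, and impose that the slope $\frac{d(\log\rho\text{-coordinate})}{d(\log\text{-arclength coordinate})}$ equals a constant $\alpha$. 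I expect this to produce a first-order (possibly after differentiation, second-order) autonomous ODE relating $\rho$, $\rho'$, and $s$.

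\smallskip

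Concretely, I would let $\kappa=\kappa(s)$ be the curvature parametrized by arc length $s$, set $\rho=1/\kappa$, and recall that the LCG is the graph of $\log\rho$ against $\log\bigl(\rho\,\tfrac{ds}{d\rho}\bigr)=\log\bigl(\rho/\rho'\bigr)$ (this is the standard horizontal coordinate for the LCG; I would confirm the exact convention from the paper's earlier definitions). Imposing that this graph is a line of slope $\alpha$ gives a differential equation of the form $\frac{d}{d(\log\rho)}\log\bigl(\rho/\rho'\bigr)=\text{const}$, which reduces to an ODE in $\rho(s)$ that I would solve explicitly. The expected outcome is the one-parameter family $\rho^{\,1-\alpha}=c_1 s+c_0$ (with logarithmic/exponential degenerations at special slopes), whose curves are precisely the LACs; the degenerate cases $\kappa\equiv\text{const}\neq 0$ and $\kappa\equiv 0$ recover circles and lines respectively.

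\smallskip

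After solving the ODE, the remaining work splits into two directions. For the "only if" direction I integrate the recovered curvature profile up to a rigid motion and check that each solution class really is a LAC (or a circle/line), invoking the paper's own definition of LAC via the fine-tuning construction or its curvature characterization stated earlier. For the "if" direction I verify by direct computation that circles, lines, and LACs each produce a linear LCG, hence satisfy the MSA; the degenerate boundary cases (lines, where $\kappa\equiv 0$ makes $\rho$ and the LCG ill-defined, and circles, where $\rho$ is constant) need separate handling, most naturally by a limiting argument or by treating them as the endpoints of the slope parameter.

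\smallskip

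\textbf{The main obstacle} I anticipate is not the ODE itself but making the equivalence between "linear LCG" and the analytic ODE fully rigorous, particularly at the degenerate strata. The LCG is defined through logarithms of curvature-derived quantities, so one must be careful about where $\kappa$, $\kappa'$, or $\rho'$ vanish; the straight-line condition must be interpreted correctly on intervals where the LCG is genuinely defined, and the circle and line cases must be folded in as limiting or boundary behavior rather than as literal linear graphs. I would therefore devote the most care to stating precisely the domain on which the LCG is defined, handling the separation of variables when $\alpha=1$ (the exponential/logarithmic-spiral case) as a distinct sub-case, and confirming that no spurious solutions outside the claimed three families slip through.
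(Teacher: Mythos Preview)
Your plan rests on a misreading of the paper's Definition~3.4. In this paper the MSA is \emph{not} defined as ``the LCG is a straight line''; that is (essentially) the \emph{definition} of a LAC. Rather, the MSA is the scaling condition
\[
\Lambda_\varepsilon\bigl(s_\gamma(t),\rho_\gamma(t)\bigr)=\bigl(\mu^\varepsilon s_\gamma(t),\,\nu^\varepsilon\rho_\gamma(t)\bigr)
\]
for some reparametrization $t=t(s)$ and constants $\mu,\nu>0$, where $\Lambda_\varepsilon$ is the $\varepsilon$-shift of the parameter $t$. So the content of the theorem is precisely to link this shift/scaling property to the linear-LCG (i.e.\ LAC) characterization; your outline proves only that ``linear LCG $\Leftrightarrow$ LAC/circle/line,'' which is close to tautological here and leaves the actual theorem untouched. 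In particular, the delicate point the paper emphasizes in Remark~3.6 --- that the MSA condition involves a \emph{specific} reparametrization $t$, and fails for $t=s$ even for logarithmic spirals --- never enters your argument.

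The paper's proof does share your instinct to reduce to an ODE, but the ODE comes from differentiating the MSA shift condition in $\varepsilon$ at $\varepsilon=0$, which produces \emph{two} decoupled first-order ODEs, one for $s(t)$ and one for $\rho(t)$, in the auxiliary parameter $t$ (not a single ODE for $\rho(s)$). Solving these gives $s(t)$ exponential or linear in $t$ and $\rho(t)$ exponential or constant in $t$; eliminating $t$ then yields the LAC relation $\rho=(\xi s+\eta)^{1/\alpha}$ with $\alpha=\log_\nu\mu$, or the circle/line degenerations when $\nu=1$. For the converse the paper exhibits, for each LAC (and for circles and lines), an explicit reparametrization $t$ for which the shift condition holds with concrete $\mu,\nu$. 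To fix your plan you would need either to first prove ``MSA $\Leftrightarrow$ linear LCG'' (which is not in the paper and is not obviously simpler than the theorem itself), or to work directly from Definition~3.4 as the paper does.
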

\begin{theorem}(Theorem \ref{HSAparabola})\label{main2}    A curve possesses the HSA if and only if it is either a line or a parabola. 
\end{theorem}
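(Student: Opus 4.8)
The plan is to translate the HSA into a differential equation for the Euclidean curvature $\kappa$ as a function of arc length $s$, and then to recognise that equation as the equiaffine flatness condition, whose solutions are exactly the parabolas. Throughout I write $\kappa_s,\kappa_{ss}$ for arc-length derivatives and $\rho = 1/\kappa$ for the radius of curvature; lines, for which $\kappa \equiv 0$, will appear as a degenerate boundary case and must be handled by a separate direct argument at the outset, since the reduction below presupposes $\kappa \neq 0$.

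Starting from the reformulated HSA, I would first unwind it into an intrinsic condition on $\kappa(s)$. Because the self-affinity is a statement about how the curvature profile reproduces itself under the scaling that carries one curvature bin into the next, I expect it to become a second-order \emph{autonomous} ODE $F(\kappa,\kappa_s,\kappa_{ss}) = 0$, with no explicit dependence on $s$, reflecting the freedom to slide the self-affine window along the curve. Since the HSA bins by $\log\rho$, the natural reparametrisation is $d\sigma = \kappa^{1/3}\,ds$, which is precisely the equiaffine arc-length element: for a Euclidean arc-length parametrisation one has $[\gamma',\gamma''] = \kappa$, so $d\sigma = [\gamma',\gamma'']^{1/3}\,ds$. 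I would then rewrite the HSA-ODE in the variable $\sigma$.

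The key step is to match the resulting equation with the vanishing of the equiaffine curvature $\mu$. Computing $\mu$ from the Frenet frame gives $\mu = \kappa^{-8/3}\bigl(\kappa^4 + \tfrac13\kappa\kappa_{ss} - \tfrac59\kappa_s^2\bigr)$, so that $\mu = 0$ is the autonomous ODE $9\kappa^4 + 3\kappa\kappa_{ss} - 5\kappa_s^2 = 0$. Setting $q = \kappa_s^2$ and viewing $q$ as a function of $\kappa$ linearises this to $q' - \tfrac{10}{3\kappa}q = -6\kappa^3$, with first integral $\kappa_s^2 = C\kappa^{10/3} - 9\kappa^4$; a direct check shows the parabola $y = x^2/2$ realises it with $C = 9$, since there $\kappa = (1+\sigma^2)^{-3/2}$ yields $\kappa_s^2 = 9\kappa^{10/3} - 9\kappa^4$. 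Once the HSA-ODE is identified with $\mu = 0$, the classical fact that constant equiaffine curvature characterises the conics---with $\mu = 0$ singling out the parabola---delivers the forward implication. For the converse I would exhibit the parabola's exact equiaffine self-symmetry, namely the unimodular shears realising $\sigma \mapsto \sigma + c$ together with the affine scaling $(x,y)\mapsto(\lambda x,\lambda^2 y)$ that rescales $\sigma$ by $\lambda$, and verify that this symmetry is exactly what the reformulated HSA demands.

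The main obstacle is the first reduction: faithfully converting the HSA---historically a discrete, informally stated histogram property---into the clean intrinsic ODE, and in particular checking that the scaling implicit in the HSA is the equiaffine one $\sigma \mapsto \lambda\sigma$ rather than the similarity scaling underlying the MSA. This is precisely the point at which the HSA and the MSA diverge, so the bookkeeping that produces the coefficients $\tfrac13$ and $\tfrac59$ (equivalently the exponent $10/3$ in the first integral) must be carried out with care; an error here would instead reproduce the LAC equation and the wrong conclusion. A secondary technical point is the treatment of inflections and the line, where $\kappa \to 0$, the equiaffine arc length degenerates, and both the passage to $\sigma$ and the formula for $\mu$ break down, so the line must be admitted separately.
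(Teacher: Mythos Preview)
Your converse is fine and matches the paper's: the parabola's two-parameter family of self-maps---scaling $(x,y)\mapsto(\lambda x,\lambda^2 y)$ composed with the unimodular shears $t\mapsto t+c$---realises every subinterval, exactly as in the explicit formula the paper writes down for $P(t)=(t,t^2)$.

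The forward direction, however, has a real gap, and the gap is precisely the step you flag as ``the main obstacle'' but then do not carry out. You write that the HSA ``bins by $\log\rho$'' and is ``a statement about how the curvature profile reproduces itself under the scaling that carries one curvature bin into the next.'' That is not the HSA: it is the LCH/MSA picture. The HSA in this paper is purely geometric---every subarc is an affine image of the whole arc---and one of the paper's points is that this is \emph{not} equivalent to LCH linearity. If you actually ran your proposed reduction through $\log\rho$-bins you would land on the LAC equation, not on $\mu=0$. You also never explain why your ODE should be $\mu=0$ rather than $\mu=\text{const}$; nothing in your sketch rules out ellipses and hyperbolas, yet the theorem does.

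There \emph{is} a clean equiaffine route, but it is a functional equation, not a second-order ODE obtained from histogram scaling. Under $z\mapsto Az+b$ the equiaffine arc length scales by $(\det A)^{1/3}$ and the equiaffine curvature by $(\det A)^{-2/3}$. Writing $\mu(\sigma)$ on $[0,L]$, the HSA for the subinterval $[a,b]$ forces
\[
\mu\bigl(a+\lambda\sigma\bigr)=\lambda^{-2}\mu(\sigma),\qquad \lambda=\tfrac{b-a}{L},
\]
for \emph{all} admissible $a$ and $\lambda$. Taking $a=0$ gives $\mu(\sigma)=C\sigma^{-2}$; then any $a\neq0$ forces $C=0$. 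This is where the two-parameter freedom of subintervals overdetermines the one-parameter freedom of $\det A$ and kills the nonzero-curvature conics---the mechanism your sketch is missing. The paper itself takes a quite different, extrinsic route: it normalises $-\gamma(0)=\gamma(1)=1$, $\gamma'(1)=\sqrt{-1}$, splits $[0,1]$ at a variable $p$, reads off the matrices $\check A,\hat A$ from endpoint and tangent data, uses the Euclidean curvature transformation law (Proposition~\ref{deformation_circle}) to pin down one entry, and obtains a first-order linear ODE $\tfrac{dy}{dx}+\tfrac{y}{1-x}=\tfrac{2z_0\beta(x)}{1-x}$ whose solution is the parabola.
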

\begin{theorem}(Theorem \ref{extSA})\label{main3}   In equiaffine geometry, a curve possesses the ESA if and only if it is a constant curvature curve (a quadratic curve; either a parabola, an ellipse, or a hyperbola). 
\end{theorem}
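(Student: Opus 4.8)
The plan is to work entirely with equiaffine (special-affine) invariants, using the equiaffine arc length $\sigma$ and the equiaffine curvature $\kappa$ as the fundamental data of the curve. The classical fact I would invoke is that a nondegenerate plane curve has constant equiaffine curvature if and only if it is a conic, the sign of $\kappa$ distinguishing the three types: $\kappa > 0$ yields ellipses, $\kappa < 0$ yields hyperbolas, and $\kappa = 0$ yields parabolas (the last class being exactly the equiaffine zero-curvature curves already singled out by the HSA in Theorem \ref{HSAparabola}). Since this conic classification is standard in affine differential geometry, the essential content of the theorem is the equivalence between the ESA and the constancy of $\kappa$; the classification then identifies the resulting curves.

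First I would translate the definition of the ESA into an analytic condition on a curve parametrized by $\sigma$. By analogy with the LCG formulation underlying the MSA, I expect the ESA to assert that each segment of the curve is equiaffinely congruent to a rescaled segment of itself under a one-parameter family of equiaffine motions combined with a scaling of $\sigma$. Applying the equiaffine Frenet equations to express the derivatives of the position vector through $\kappa$ and its $\sigma$-derivatives, this congruence should collapse to a functional relation for $\kappa$, which upon differentiation yields an ordinary differential equation for $\kappa(\sigma)$.

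The next step is to solve that ODE and show that the self-affine structure forces $\kappa'(\sigma) = 0$, so that $\kappa$ is constant; this is the point at which the a priori large family of self-affine curves collapses onto the constant-curvature ones. For the converse I would check directly that every constant-$\kappa$ curve satisfies the ESA, which should be the easy direction, since a conic is manifestly invariant (up to an equiaffine motion and a reparametrization of $\sigma$) under the family of transformations defining the ESA.

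The main obstacle I anticipate is the forward implication in the second step: extracting the correct differential equation from the ESA and ruling out non-constant solutions. In the similarity setting the linearity of the LCG gives a clean first-order relation, but its equiaffine analog involves higher-order derivatives of the position vector through the equiaffine frame, so the computation is heavier and one must verify that no non-conic solution survives. A secondary subtlety is to phrase the argument so that the three conic types $\kappa > 0$, $\kappa = 0$, and $\kappa < 0$ are handled by a single uniform condition rather than by a case split, which is presumably the sense in which the ESA \emph{integrates} the HSA and the MSA.
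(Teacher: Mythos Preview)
Your overall strategy---use the equiaffine frame and curvature, reduce the ESA to a condition on $\kappa^{\mathrm{SA}}$, show that condition forces $\kappa^{\mathrm{SA}}$ to be constant, and then invoke the standard classification of constant-curvature curves as conics---matches the paper's. However, you have misread what the ESA asserts, and this leads you to overestimate the difficulty of the forward implication.

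The ESA in equiaffine geometry is simply the statement that, in the equiaffine arc-length parameter $t$, there is a smooth family $F_\varepsilon\in G^{\mathrm{SA}}$ with $\gamma(t+\varepsilon)=F_\varepsilon\gamma(t)$. There is \emph{no} rescaling of the parameter: the analogy with the LCG formulation of the MSA is looser than you suppose, and the ``scaling of $\sigma$'' you build in is not present. Once this is dropped, the forward direction is short and does not require solving an ODE with possible non-constant solutions. Differentiating $\gamma(t+\varepsilon)=F_\varepsilon\gamma(t)$ in $t$ and then in $\varepsilon$ at $\varepsilon=0$ yields $\Phi^{\mathrm{SA}}_t=F\,\Phi^{\mathrm{SA}}$ with $F:=\tfrac{d}{d\varepsilon}F_\varepsilon|_{\varepsilon=0}$ a \emph{constant} matrix (it depends only on $\varepsilon$, not on $t$). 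Comparing this with the equiaffine Frenet relation $\Phi^{\mathrm{SA}}_t=\Phi^{\mathrm{SA}}\begin{pmatrix}0&-\kappa^{\mathrm{SA}}\\1&0\end{pmatrix}$ and differentiating once more in $t$ gives immediately $\kappa^{\mathrm{SA}}_t=0$. So the ``heavy computation'' you anticipate collapses to a two-line linear-algebra identity; there are no spurious non-conic solutions to rule out.

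For the converse the paper proceeds exactly as you suggest, exhibiting the one-parameter family $F_\varepsilon$ explicitly: the addition formulas for $(\cos,\sin)$ and $(\cosh,\sinh)$ give the required $SL(2,\mathbb{R})$ matrices for ellipses and hyperbolas, and the parabola case is already handled by the HSA (Theorem~\ref{HSAparabola}). Your instinct that the three signs of $\kappa^{\mathrm{SA}}$ should be treated uniformly is correct for the forward direction; only in the converse does a brief case split appear, and it is entirely elementary.
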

Theorem \ref{main3} generalizes Theorem \ref{main2} in terms of the ESA in {equiaffine geometry}. 
In the case of logarithmic spirals and circles, Theorem \ref{main1} implies that the ESA in {similarity geometry} is equivalent to having a constant curvature. 
In other words, the HSA and the MSA intersect as the ESA that characterize constant curvature curves in corresponding geometries.


\if0
\begin{equation}
\frac{d}{d\theta}
    \begin{pmatrix}
        T^\mathrm{sim}\\N^\mathrm{sim}
    \end{pmatrix}
    =
    \begin{pmatrix}
        T^\mathrm{sim}\\N^\mathrm{sim}
    \end{pmatrix}
    \begin{pmatrix}
        \kappa^\mathrm{sim}&-1\\1&\kappa^\mathrm{sim}
    \end{pmatrix},
\end{equation}
where $T^\mathrm{sim}=\frac{d\gamma}{d\theta}$ and $N^\mathrm{sim}=\imaginary T^\mathrm{sim}$. 
\fi



This paper is organized as follows. 
In Section 2, we present the basics of planar curves that will be referred to. 
In Section 3, we introduce the HSA and the MSA and prove Theorem \ref{main1} and Theorem \ref{main2}. 
In Section 4, we define the ESA as a generalization of the MSA and the HSA, and prove Theorem \ref{main3}. 
Section 5 is devoted to some concluding remarks implicating the connection among the main results.

\section{Preliminaries}
\subsection{Basics on planar curves}
This subsection refers to \cite{Curve}. 
Throughout this paper, we consider a \textit{parametric planar curve} (simply, a \textit{curve}). 
It is a smooth function $\gamma(t):I\rightarrow \mathbb{C}$ on an interval $I=I_\gamma\subset \mathbb{R}$. 
Once a curve $\gamma$ is given, let us assume a fixed base point at $\eta=\eta_\gamma\in I$.     
In other words, we regard a curve as the triplet $(I, \gamma(t), \eta)$ as above. 
As an additional assumption, we impose a regularity on a curve $\gamma$ such that the derivative ${d\gamma(t)}/{dt}$ is non-vanishing. 
We identify the complex number field $\mathbb{C}$ with the plane $\mathbb{R}^2$ naturally. 
\begin{definition}
    A \textit{reparametrization} between two curves $\gamma_i:I_i\rightarrow \mathbb{C},\ i=1,2$ is a smooth homeomorphism $t:I_1\rightarrow I_2$ such that 
    \begin{enumerate}
        \item $t(\eta_{\gamma_1})=\eta_{\gamma_2}$ and 
        \item $\gamma_1(t_1)=\gamma_2\circ t(t_1)$ for any $t_1\in I_1$. 
    \end{enumerate}
    If a reparametrization $t:I_1\rightarrow I_2$ is given, 
    we shall denote 
    \begin{align*}
        \gamma_2(t_1)&:=\gamma_2\circ t(t_1)=\gamma_1(t_1),\ t_1\in I_1,\\
        \gamma_1(t_2)&:=\gamma_1\circ t^{-1}(t_2)=\gamma_2(t_2),\ t_2\in I_2.
    \end{align*}
    Remark that the inverse map of a reparametrization $t_2=t_2(t_1)$ is denoted by $t_1=t_2(t_1)$. 
\end{definition}

\begin{lemma}\label{arclength}
    For any curve $\gamma:I\rightarrow \mathbb{C}$, there uniquely exists a globally increasing reparametrization $s=s_\gamma:I\rightarrow J\subset\mathbb{R}$ 
    such that 
    \begin{enumerate}
        \item $s(\eta_\gamma)=0$,\\[-2mm]
        \item $|\frac{d\gamma(s)}{ds}|=1$, \\[-2mm]
        \item $\frac{d^2\gamma(s)}{ds^2}=\sqrt{-1}\,\kappa(s)\frac{d\gamma(s)}{ds}$,\quad  $\kappa(s)=\kappa_\gamma(s):=\frac{d}{ds}\mathrm{arg}\left(\frac{d\gamma(s)}{ds}\right)$.
    \end{enumerate}
\end{lemma}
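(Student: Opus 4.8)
The plan is to construct the required reparametrization explicitly as the arc length measured from the base point, to verify the three listed properties by the chain rule, and then to pin down uniqueness via a first-order argument. Concretely, I would define
\begin{equation*}
    \sigma(t) := \int_{\eta_\gamma}^{t} \left|\frac{d\gamma(u)}{du}\right|\,du,\qquad t\in I.
\end{equation*}
By the regularity hypothesis the integrand is smooth and strictly positive, so $\sigma$ is a smooth, strictly increasing function with $\sigma(\eta_\gamma)=0$; being strictly monotone it is a smooth homeomorphism onto the interval $J:=\sigma(I)\subset\mathbb{R}$. Taking $s:=\sigma$ as the candidate reparametrization (so that the base point of the target is $0$) immediately yields property (1).

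For property (2), I would express $\gamma$ as a function of $s$ through $t=\sigma^{-1}(s)$ and apply the chain rule, obtaining $\frac{d\gamma}{ds}=\frac{d\gamma/dt}{d\sigma/dt}=\frac{\gamma'(t)}{|\gamma'(t)|}$, which has unit modulus. For property (3), the key observation is that $v(s):=\frac{d\gamma}{ds}$ takes values in the unit circle, hence admits a smooth lift of its argument, i.e.\ a smooth real function $\theta(s)$ with $v(s)=e^{\sqrt{-1}\,\theta(s)}$. Differentiating this gives $\frac{d^2\gamma}{ds^2}=\sqrt{-1}\,\theta'(s)\,v(s)$, and setting $\kappa(s):=\theta'(s)=\frac{d}{ds}\mathrm{arg}\!\left(\frac{d\gamma}{ds}\right)$ produces exactly the Frenet-type relation in (3).

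For uniqueness, I would let $r:I\to\mathbb{R}$ be any \emph{increasing} reparametrization satisfying (1) and (2) and differentiate $\gamma$ through $r$; since $r'>0$, property (2) forces $|\gamma'(t)|/r'(t)=1$, so $r'(t)=|\gamma'(t)|=\sigma'(t)$. Together with $r(\eta_\gamma)=0=\sigma(\eta_\gamma)$ from (1), integrating the derivative forces $r=\sigma$, establishing uniqueness.

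The main subtlety is the step in property (3): one must justify that the argument of the unit-modulus function $v(s)$ can be chosen as a genuinely smooth function, so that $\kappa$ is well-defined. This is a standard lifting argument on an interval, and I would either invoke it directly or sidestep it by defining $\kappa(s):=\mathrm{Im}\big(\overline{v(s)}\,v'(s)\big)$ and recovering $\theta$ by integration. I would also emphasize that the \emph{globally increasing} hypothesis is essential rather than cosmetic: the orientation-reversing choice $r(t)=-\sigma(t)$ also satisfies (1) and (2), so it is monotonicity that fixes the sign and singles out $\sigma$ as the unique solution.
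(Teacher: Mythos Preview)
Your proposal is correct and follows essentially the same route as the paper's own (suppressed) proof: define the arc-length reparametrization as the integral $\sigma(t)=\int_{\eta_\gamma}^{t}|\gamma'(u)|\,du$, verify (1)--(3) via the chain rule and the representation $\frac{d\gamma}{ds}=e^{\sqrt{-1}\,\theta(s)}$, and deduce uniqueness from the derivative constraint together with the normalization at the base point. Your treatment is in fact slightly more careful than the paper's, since you flag the smooth-lifting issue for $\theta$ and make explicit that the ``globally increasing'' hypothesis is what rules out the sign ambiguity $r=-\sigma$.
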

\if0
\begin{proof}
        We introduce the \textit{arc length function} $s(t):=\int_{\eta_\gamma}^t|\frac{d\gamma(u)}{du}|du:I\rightarrow J$ which enjoys $s(\eta_\gamma)=0$. 
        The regularity of $\gamma$ implies that the derivative $\frac{ds}{dt}=|\frac{d\gamma(t)}{dt}|$ is non-vanishing, and hence $s$ is globally either increasing or decreasing. 
        The inverse function $t=s^{-1}:J\rightarrow I$ satisfies that
        \if0
        \begin{enumerate}
            \item $\frac{dt(s)}{ds}=|\frac{d\gamma(t)}{dt}\circ t(s)|^{-1}$, and
            \item $|\frac{d\gamma(s)}{ds}|=|\frac{d\gamma(t)}{dt}\frac{dt(s)}{ds}|=|\frac{d\gamma(t)}{dt}||\frac{d\gamma(t)}{dt}\circ t(s)|^{-1}=1$. 
        \end{enumerate}
        \fi
        \begin{align*}
            \left| \frac{d\gamma(s)}{ds}\right|&=\left|\frac{d\gamma(t)}{dt}\cdot \frac{dt(s)}{ds}\right|
            \\&=\left|\frac{d\gamma(t)}{dt}\circ t(s)\right|\ \left|\frac{ds}{dt}\circ t(s)\right|^{-1}
            \\&=1.
        \end{align*}
        Denoting $\theta(s)=\mathrm{arg}\left(\frac{d\gamma(s)}{ds}\right)$, we have $\kappa(s)=\frac{d\theta(s)}{ds}$ and it follows that
        \begin{equation}
            \frac{d^2\gamma(s)}{ds^2}=\frac{d}{ds}(e^{\sqrt{-1}\,\theta(s)})=\sqrt{-1}\,\frac{d\theta(s)}{ds}e^{\sqrt{-1}\,\theta(s)}=\sqrt{-1}\,\kappa(s)\frac{d\gamma(s)}{ds}.
        \end{equation}
        If there exists another reparameterization $\tilde{s}$ satisfying the conditions (1-3), we have $|\frac{d\tilde{s}}{ds}|=|\frac{d\gamma(\tilde{s})}{d\tilde{s}}|/|\frac{d\gamma(s)}{ds}|=1$. 
        Since reparameterizations are real-valued smooth mapping, we have $\frac{d\tilde{s}}{ds}=\pm1$.
        Even if the above $s$ is decreasing, we can replace it by unique increasing $\tilde{s}$ such that $\tilde{s}(\eta_\gamma)=0$ up to changing sign and translation. 
\end{proof}
\fi
We use the notation $s=s_\gamma$ for the above \textit{arc length parameterization} of a curve $\gamma$. 
%
We introduce the Euclidian frame $\Phi^E:=(\gamma_s,\imaginary\gamma_s)$. 
Then, the \textit{(Euclidian) curvature} $\kappa=\kappa_\gamma$ reproduces the input curve $\gamma$ in the following sense. 
\begin{proposition}[Fundamental theorem of curves]\label{FTC}
    For a given non-negative, smooth function $\kappa(s):I\rightarrow \mathbb{R}$, the Frenet formula 
    \begin{align}
        \Phi^E_s=\Phi^E\begin{pmatrix}
            0&-\kappa\\ \kappa&0
        \end{pmatrix}
    \end{align}
    has a unique solution $\gamma(s):I\rightarrow \mathbb{C}$ such that $\kappa_\gamma(s)=\kappa(s)$ up to the congruent transformation group $G^E:=\{z\mapsto Az+b\mid A\in O(2),\ b\in\mathbb{C}\}$. 
\end{proposition}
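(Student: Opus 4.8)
The plan is to reduce the matrix Frenet system to a single scalar ODE for the unit tangent, integrate it twice, and obtain uniqueness by comparing tangent angles. Writing the frame $\Phi^E=(\gamma_s,\imaginary\gamma_s)$ by columns, the matrix identity $\Phi^E_s=\Phi^E\begin{pmatrix}0&-\kappa\\\kappa&0\end{pmatrix}$ is equivalent to the single complex equation $\gamma_{ss}=\imaginary\kappa\,\gamma_s$: the second column is just $\imaginary$ times the first, so it carries no independent information. Setting $T:=\gamma_s$ this becomes the linear ODE $T_s=\imaginary\kappa\,T$.

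For existence, I would prescribe initial data $T(\eta_\gamma)=e^{\imaginary\theta_0}$ of unit modulus together with $\gamma(\eta_\gamma)=\gamma_0$, and integrate explicitly. Put $\theta(s):=\theta_0+\int_{\eta_\gamma}^s\kappa(u)\,du$ and $T(s):=e^{\imaginary\theta(s)}$; then $T_s=\imaginary\theta_s e^{\imaginary\theta}=\imaginary\kappa\,T$ and $\abslr{T}\equiv1$, so $\gamma(s):=\gamma_0+\int_{\eta_\gamma}^s e^{\imaginary\theta(u)}\,du$ is a regular, arc length parametrized curve, smooth because $\kappa$ is. Finally this $\gamma$ realizes the prescribed curvature: since $\arg(\gamma_s)=\theta(s)$, the defining formula in Lemma \ref{arclength}(3) gives $\kappa_\gamma=\frac{d}{ds}\arg(\gamma_s)=\theta_s=\kappa$.

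For uniqueness, suppose $\gamma$ and $\tilde\gamma$ both solve the system with the same $\kappa$. Both are arc length parametrized, and $\frac{d}{ds}\arg(\gamma_s)=\kappa=\frac{d}{ds}\arg(\tilde\gamma_s)$ forces $\arg(\gamma_s)-\arg(\tilde\gamma_s)=\alpha$ to be constant. Hence $\gamma_s=e^{\imaginary\alpha}\tilde\gamma_s$, and integrating once gives $\gamma=e^{\imaginary\alpha}\tilde\gamma+b$ for some $b\in\mathbb{C}$. This is exactly the map $z\mapsto Az+b$ with $A$ the rotation by $\alpha$, an element of $G^E$, so any two solutions agree modulo $G^E$.

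The theorem is classical, so I expect no genuine analytic obstacle; the point requiring care is the bookkeeping between the sign convention and the group $G^E$. Because $\kappa$ is assumed non-negative, the ambiguity is already exhausted by the orientation-preserving motions $SO(2)\ltimes\mathbb{C}$ coming from the free constants $\theta_0$ and $\gamma_0$; the reflections in $O(2)$ reverse the sign of $\kappa$ and therefore send a solution to another solution only when $\kappa\equiv0$. Allowing the full $O(2)$ only enlarges the group one quotients by, so the statement remains correct, but I would verify explicitly that no identifications beyond rotations and translations occur when $\kappa$ is positive somewhere. Alternatively, existence and uniqueness of the frame $\Phi^E$ follow from the standard theory of linear ODEs, the skew-symmetry of $\kappa\begin{pmatrix}0&-1\\1&0\end{pmatrix}$ guaranteeing that solutions stay in $O(2)$; the explicit integration above is merely the cleaner route in the complex formulation.
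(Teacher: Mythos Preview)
Your argument is correct and is the standard proof of the fundamental theorem for planar curves: reduce the Frenet system to the scalar equation $\gamma_{ss}=\imaginary\kappa\,\gamma_s$, integrate $\theta(s)=\theta_0+\int_{\eta_\gamma}^s\kappa$, set $\gamma(s)=\gamma_0+\int_{\eta_\gamma}^s e^{\imaginary\theta}$, and compare tangent angles for uniqueness. Your caveat about $O(2)$ versus $SO(2)$ is also accurate: with $\kappa\geq0$ the orientation-preserving motions already exhaust the ambiguity, and enlarging to $O(2)$ only coarsens the quotient.

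There is nothing to compare against, however: the paper does not prove this proposition. Section~2.1 opens with ``This subsection refers to [Curve]'' and simply states Proposition~\ref{FTC} as a classical fact from that reference, with no proof or sketch. So your write-up supplies an argument the paper deliberately omits.
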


We call the reciprocal $\rho=\rho_\gamma=1/\kappa_\gamma$ the \textit{curvature radius} of a curve $\gamma$. 
As a consequence of Proposition \ref{FTC}, it follows that the curvature radius $\rho_\gamma(s_0)$ is the radius of the unique \textit{osculating circle} that approximates $\gamma(s)$ at $s=s_0$ in quadratic order. 
In the following, we denote $s$-differential by $(\cdot)'$. 
\begin{proposition}\label{deformation_circle}
    Let $\gamma(s):I\rightarrow \mathbb{C}$ be a curve. 
    Then, for any matrix $A\in GL(2,\mathbb{R})$, it follows that 
    \begin{equation}\label{eqrhoA}
        \rho_{A\gamma}(s)=\frac{|A\gamma'(s)|^3}{\mathrm{det}A}\rho_\gamma(s).
    \end{equation}
\end{proposition}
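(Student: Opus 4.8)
The plan is to compute the transformed curvature $\kappa_{A\gamma}$ explicitly and then pass to reciprocals. First I would recall the standard expression for the signed curvature of an \emph{arbitrarily} parametrized regular curve $c(t):I\to\mathbb{C}$, namely
\begin{equation}
    \kappa_c=\frac{\mathrm{Im}\left(\overline{c'}\,c''\right)}{|c'|^3},
\end{equation}
which follows from Lemma \ref{arclength} by the chain rule (writing $\kappa_c=\frac{1}{|c'|}\frac{d}{dt}\mathrm{arg}(c')$) and which specializes, via the Frenet formula and $|\gamma'|=1$, to $\mathrm{Im}(\overline{\gamma'}\gamma'')=\kappa_\gamma$ in the arc length parameter $s$. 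The crucial point to keep in mind is that $s$ is in general \emph{not} the arc length of the transformed curve $A\gamma$, so $\kappa_{A\gamma}$ cannot simply be read off from a Frenet frame; instead I apply the displayed formula to $c=A\gamma$. Since $A$ is a constant linear map, $(A\gamma)'=A\gamma'$ and $(A\gamma)''=A\gamma''$, whence
\begin{equation}
    \kappa_{A\gamma}=\frac{\mathrm{Im}\left(\overline{A\gamma'}\cdot A\gamma''\right)}{|A\gamma'|^3}.
\end{equation}

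The key step is the transformation law for the numerator. In real coordinates on $\mathbb{C}\cong\mathbb{R}^2$ one has $\mathrm{Im}(\overline{u}\,v)=u_1v_2-u_2v_1=\det[\,u\mid v\,]$, the signed area of the parallelogram spanned by $u$ and $v$. Here I must be careful that $A\in GL(2,\mathbb{R})$ is a \emph{real}-linear map on $\mathbb{R}^2$, not a $\mathbb{C}$-linear one, so it need not respect conjugation or multiplication by $\imaginary$; the correct invariant to track is precisely this real area form. By multiplicativity of the determinant, $\det[\,Au\mid Av\,]=\det(A)\,\det[\,u\mid v\,]$, that is,
\begin{equation}\label{areaform}
    \mathrm{Im}\left(\overline{Au}\cdot Av\right)=\mathrm{det}A\;\mathrm{Im}\left(\overline{u}\,v\right)\qquad(u,v\in\mathbb{C}).
\end{equation}

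Finally I would combine these ingredients: setting $u=\gamma'$ and $v=\gamma''$ in \eqref{areaform} and using $\mathrm{Im}(\overline{\gamma'}\gamma'')=\kappa_\gamma$ gives
\begin{equation}
    \kappa_{A\gamma}=\frac{\mathrm{det}A\cdot\kappa_\gamma}{|A\gamma'|^3},
\end{equation}
and taking reciprocals yields \eqref{eqrhoA}. The main obstacle is conceptual rather than computational: one must resist treating $A$ as acting holomorphically and instead recognize that the dependence of curvature on $A$ is governed by how the area form $\mathrm{Im}(\overline{u}\,v)$ and the speed $|A\gamma'|$ transform \emph{separately}. A minor point worth flagging is the case $\mathrm{det}A<0$, in which $A$ reverses orientation and $\rho_{A\gamma}$ accordingly changes sign; the formula is consistent with this, and if only the unsigned curvature radius is intended, one replaces $\mathrm{det}A$ by $|\mathrm{det}A|$.
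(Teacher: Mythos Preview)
Your proof is correct and follows essentially the same route as the paper's own argument: both invoke the general-parameter curvature formula $\kappa_c=\det(c',c'')/|c'|^3$ (which you write as $\mathrm{Im}(\overline{c'}c'')/|c'|^3$ and then identify with the real $2\times 2$ determinant), apply it to $A\gamma$ in the arc length parameter $s$ of $\gamma$, and use $\det(Au,Av)=(\det A)\det(u,v)$ to handle the numerator. Your remark about the sign when $\det A<0$ is a reasonable addition that the paper leaves implicit.
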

\begin{proof}
We use the formula \cite{Curve} of 
curvature radius
\begin{align}\label{curvature}
    \rho_{\gamma}(t)=\frac{|\gamma_t(t)|^3}{\det(\gamma_{t}(t), \gamma_{tt}(t))},
\end{align}
where $t$ is an arbitrary parameter. 
Let $t:=s_\gamma$ be the arc length parameter of $\gamma$, then we have $\rho_\gamma(t)=\det(\gamma_t,\gamma_{tt})$. 
Applying \eqref{curvature} to $A\gamma(t)$ yields 
\begin{align}
    \rho_{A\gamma}(t)=\frac{|A\gamma_t(t)|^3}{\det(A\gamma_{t}(t), A\gamma_{tt}(t))}.
\end{align}
Since $\det(A\gamma_t,A\gamma_{tt})=\det (A(\gamma_t,\gamma_t))=\det A \det (\gamma_t,\gamma_{tt})=\det A/\rho_\gamma$, we have
\begin{align}
    \rho_{A\gamma}(t)=\frac{|A\gamma_t(t)|^3}{\det A}\rho_\gamma(t),
\end{align}
which is \eqref{eqrhoA}. 
\if0
    We denote $\rho_0:=\rho_\gamma(0)$, $\rho_A:=\rho_{A\gamma}(0)$, and $A=:\begin{spmatrix}
        a&b\\c&d
    \end{spmatrix}$. 
    With appropriate transformations, we may assume $\eta_\gamma=0$, and $\gamma(0)=0$, $\gamma'(0)=1$. 
    We have the following quadratic approximations as $s\rightarrow0$:
    \begin{align}
        \gamma(s)&=s+\sqrt{-1}\,\frac{s^2}{2\rho_0}+o(s^2),\label{quadapproxZ}\\
        A\gamma(s)&=(a+\sqrt{-1}\,b)s+(c+\sqrt{-1}\,d)\frac{s^2}{2\rho_0}+o(s^2).\label{quadapproxA}
    \end{align}
    In fact, \eqref{quadapproxZ} follows from $\gamma(0)=0$, $\gamma'(0)=1$, and $\gamma''(0)=\imaginary/\rho_0$ by Lemma \ref{arclength}. 
    \eqref{quadapproxA} is obtained by applying $A:(1,\imaginary)\mapsto (a+\imaginary b,c+\imaginary d)$ to \eqref{quadapproxZ}. 
    We consider the osculating circle $C(s)$ of $A\gamma(s)$ at $s=0$. 
    Since it satisfies $C(0)=0$ and its radius is $\rho_A$, it is represented by $C(s)=\zeta (1-e^{\sqrt{-1}\,\theta(s)})$ for appropriate $\zeta=\rho_A e^{\sqrt{-1}\,\phi} \in \mathbb{C}$ and a smooth function $\theta:I\rightarrow \mathbb{R}$ with $\theta(0)=0$. 
    As $s\rightarrow 0$, the Taylor expansion yields
    \begin{align}
        C(s)=\zeta\theta'(0)s+\zeta\vlacket{\sqrt{-1}\,\theta'(0)^2+\theta''(0)}\frac{s^2}{2}+o(s^2).\label{quadapproxC}
    \end{align}
    Since the osculating circle $C(s)$ is the unique circle that approximates the curve $A\gamma(s)$ in quadratic order as $s\rightarrow0$, the comparison of \eqref{quadapproxA} and \eqref{quadapproxC} shows 
    \begin{align}
        a+\sqrt{-1}\,b&=\zeta\theta'(0),\\
        c+\sqrt{-1}\,d&=\rho_0\zeta \vlacket{\sqrt{-1}\,\theta'(0)^2+\theta''(0)}, \label{cid}
    \end{align}
    which yield
    \begin{align}
        \theta'(0)&=\frac{\sqrt{a^2+b^2}}{|\zeta|},\label{theta0}
        \\
        \zeta&=\frac{a+\sqrt{-1}\,b}{\theta'(0)}=|\zeta|\frac{a+\sqrt{-1}\,b}{\sqrt{a^2+b^2}}.\label{zeta}
    \end{align}
    It follows from \eqref{cid}, \eqref{theta0} and \eqref{zeta} that 
    \begin{align}
        c+\sqrt{-1}\,d&=\rho_0\zeta \frac{a+\sqrt{-1}\,b}{\sqrt{a^2+b^2}}\vlacket{\sqrt{-1}\,\frac{a^2+b^2}{|\zeta|^2}+\theta''(0)}=:U,\\
        ad-bc&=a\mathrm{Im}(U)-b\mathrm{Re}(U)=\frac{(a^2+b^2)^\frac{3}{2}}{|\zeta|}\rho_0,
    \end{align}
    and so we have
    \begin{align}
        \rho_A&=|\zeta|=\frac{(a^2+b^2)^\frac{3}{2}}{ad-bc}\rho_0=\frac{|A\gamma'(0)|^3}{\mathrm{det}A}\rho_0. 
    \end{align}
    For a general $\gamma(s)$ with $\eta_\gamma=0$, the curve 
    \begin{align}\label{hatgamma}
        \hat{\gamma}(s):=\frac{\gamma(s)-\gamma(0)}{\gamma'(0)}, 
    \end{align}
    satisfies the previous assumption $\hat\gamma(0)=0$, $\hat\gamma'(0)=1$. 
    By differentiating \eqref{hatgamma}, we have
    $|\hat{\gamma}'(s)|=|\gamma'(s)|/|\gamma'(0)|=1$. 
    Differentiating \eqref{hatgamma} twice and using $\rho=1/
    |\gamma''|$, we have 
    $\rho_{\gamma}=\rho_{\hat{\gamma}}$. 
    Applying $A$ to \eqref{hatgamma} yields 
    \begin{align}\label{Ahatgamma}
        A\hat{\gamma}(s)=A\frac{\gamma(s)}{\gamma'(0)}-A\frac{\gamma(0)}{\gamma'(0)}. 
    \end{align}
    Differentiating \eqref{Ahatgamma} 
    
    The above discussion yields
    \begin{equation}
        \rho_{A\gamma}(0)=|\gamma'(0)|\rho_{A\hat{\gamma}}(0)=\frac{|\gamma'(0)||A\hat{\gamma}'(0)|^3}{\mathrm{det}A}\rho_{\hat{\gamma}}(0)=\frac{|A\gamma'(0)|^3}{\mathrm{det}A}\rho_{{\gamma}}(0).
    \end{equation}
    Thus the claim holds. 
\fi
\end{proof}
    
\subsection{Logarithmic Curvature Histogram and Graph}
Let $\gamma(s):[0,s_{\text{all}}]\rightarrow\mathbb{C}$ be a curve where $s_\text{all}>0$ is the total length. 
We will consider the length histogram of $\gamma$ against the logarithmic curvature radius $X=\log\rho$. 
For fixed $M,N\in\mathbb{N}$, let $\{R_i\}_{i=1}^M$ be the $M$ subdivisions of the range of $X$ of equal length and $\{\rho_j\}_{j=1}^N$ be the curvature radius of $N$ division points on $\gamma$ with equal arc length. 
That is,
    \begin{align}
        R_i&:=\vlacket{x\in\mathbb R\mid \frac{i}{M}\leq \frac{x-\min X}{\max X-\min X}<\frac{i+1}{M}},\ i=0,...,M,\\
        \rho_j&:=\rho_\gamma\placket{\frac{j}{N}\cdot s_\text{all}},\ j=0,...,N. 
    \end{align}
    The \textit{logarithmic curvature histogram (LCH)} \cite{Harada1995, Harada1999} of $\gamma$ is the histogram $\Gamma_{M,N}(\gamma)$ defined by counting the logarithmic value 
    \begin{equation}
        Y_i=\log\frac{\Delta s_i}{\Delta X_i}    :=
        \log\frac{\#\vlacket{j\mid\log\rho_j\in R_i}\cdot s_{\text{all}}/N}{|R_i|},\ i=0,...,M,
    \end{equation}
    against each domain $R_i$ (or its representative $X_i:=\min R_i$) unless $\#\{j\}=0$.
We note that the idea of taking logarithmic coordinates can be observed in the area of \textit{allometry} \cite{allo} in natural structures. 
\if0
One can imagine the distribution whose density function is
\begin{equation}
    f_{M,N}(X)=
    \begin{cases}
        e^{Y_i} &\text{ if $X\in R_i$ and $Y_i\neq-\infty$,}\\
        0&\text{ otherwise.}
    \end{cases}
\end{equation}
\fi

Harada et al.\ pointed out in \cite{Harada1995,Harada1999} that the LCHs of ``aesthetic" curves drawn by professional car designers and modelers, and the keyline curves of actual cars have a linear tendency. 
Based on this observation, they proposed the following property. 
\begin{definition}[the Harada self-affinity, see also 
Definition \ref{HSA}, and Figure 6 in \cite{Harada1999}]\label{HSA_original}
    A curve possesses \textit{the Harada self-affinity (the HSA)} if its arbitrary subcurve coincides with the image of an affine deformation of the whole curve. 
\end{definition}

We will show that the linearity of LCHs and the HSA are actually different; 
the linearity of LCHs should not be thought of as a self-affinity in the Euclidian plane of curves but that in the plane of LCHs. 
Miura \cite{Miura2006} proposed an alternate self-affinity (the Miura self-affinity, Definition \ref{MSA}) that is regarded as a self-affinity in the \textit{logarithmic curvature graph (LCG)} of $\gamma(s):[0,s_\text{all}]\rightarrow\mathbb{C}$ defined by 
\begin{equation}\label{LCG}
    \Gamma(\gamma):=\vlacket{(X,Y)=\placket{\log \rho(s),\log \abslr{\frac{ds}{d\log \rho(s)}}}\Big|\, s\in [0,s_\text{all}]}.
\end{equation}

We now show that the continuous limits of LCHs are LCGs. 
This fact is mentioned in \cite{Nakano2003} but we give a mathematically rigorous proof.   
For LCH, we define
\begin{equation}
    f_{M,N}(X)=
    \begin{cases}
        e^{Y_i} &\text{ if $X\in R_i$ and $Y_i\neq-\infty$,}\\
        0&\text{ otherwise.}
    \end{cases}
\end{equation}
Then, we have:
\begin{proposition}\label{LCHLCG}
    Let $\gamma$ be a curve such that $\rho(s)$ is smooth and monotonous.
    Then, the distribution $\mu_{M,N}(dX)=\sum f_{M,N}(X)\,dX$ of $\Gamma_{M,N}(\gamma)$ strongly converges to the distribution $\mu(dX)=e^YdX$ of $\Gamma(\gamma)$ as $M,N\rightarrow\infty$. 
    In particular, the LCH plot converges to the LCG plot pointwise almost everywhere as $M,N\rightarrow\infty$. 
\end{proposition}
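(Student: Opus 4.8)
The plan is to interpret both objects as measures built from the arc-length measure and to prove that their densities converge pointwise, from which the convergence of measures will follow. First I would exploit the monotonicity of $\rho$: since $\rho(s)$ is smooth, positive, and monotone, $X(s)=\log\rho(s)$ is a smooth monotone bijection of $[0,s_{\text{all}}]$ onto $[\min X,\max X]$, with a smooth inverse $s=s(X)$ satisfying $|ds/dX|=e^{Y}$. Under this change of variables the target $\mu(dX)=e^Y\,dX$ is exactly the pushforward of the arc-length measure $ds$ by $X$; in particular $\mu(R_i)=\int_{R_i}e^{Y}\,dX=|s(b_i)-s(a_i)|$ for a bin $R_i=[a_i,b_i)$. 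On the discrete side I would rewrite the bin mass as $\mu_{M,N}(R_i)=f_{M,N}(X)\,|R_i|=\#\{j:X(s_j)\in R_i\}\cdot(s_{\text{all}}/N)$, the weighted count of the equally spaced sample points $s_j=(j/N)s_{\text{all}}$ landing in $R_i$. This recasts $f_{M,N}$ as a discrete difference quotient $\Delta s_i/\Delta X_i$ approximating $|ds/dX|=e^{Y}$.

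The core estimate splits into two steps. For a fixed bin structure (fixed $M$), the points $s_j$ falling in $R_i$ are, by monotonicity of $X$, exactly those lying between $s(a_i)$ and $s(b_i)$; since the $s_j$ are equally spaced with gap $s_{\text{all}}/N$, the weighted count differs from the interval length by at most the two boundary gaps, i.e. $\bigl|\mu_{M,N}(R_i)-|s(b_i)-s(a_i)|\bigr|\le 2s_{\text{all}}/N$. Dividing by $|R_i|=(\max X-\min X)/M$ and using the triangle inequality gives, for $X\in R_i$,
\[
\bigl|f_{M,N}(X)-e^{Y(X)}\bigr|\le \frac{2s_{\text{all}}M}{(\max X-\min X)\,N}+\Bigl|\frac{|s(b_i)-s(a_i)|}{|R_i|}-e^{Y(X)}\Bigr|.
\]
The second term tends to $0$ as $M\to\infty$ by the mean value theorem and the continuity of $ds/dX$, while the first vanishes precisely when $M/N\to0$. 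Hence $f_{M,N}(X)\to e^{Y(X)}$ pointwise in the natural regime $N\gg M$, in which each bin collects many sample points.

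To pass from densities to measures I would invoke a domination argument. Since $ds/dX$ is continuous on the compact interval $[\min X,\max X]$, it is bounded by some $C_0$, and the bound above gives $f_{M,N}\le C_0+2s_{\text{all}}M/((\max X-\min X)N)$, so the densities are uniformly bounded once $M/N$ stays bounded; moreover the total masses converge, $\int f_{M,N}\,dX\to s_{\text{all}}=\int e^{Y}\,dX$, since $\int f_{M,N}\,dX$ is the number of sampled points times $s_{\text{all}}/N$. By Scheff\'e's lemma, pointwise a.e.\ convergence of the nonnegative densities together with convergence of their integrals yields $f_{M,N}\to e^{Y}$ in $L^1$, i.e. $\mu_{M,N}\to\mu$ in total variation; the asserted strong (setwise) convergence and the pointwise a.e.\ convergence of the LCH plot to the LCG plot are then immediate.

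The main obstacle I anticipate is the coupling of the two limits. Refining the bins ($M\to\infty$) sharpens the derivative approximation $\Delta s_i/\Delta X_i\to e^{Y}$ but simultaneously empties each bin of sample points, whereas refining the sampling ($N\to\infty$) repopulates them; the error term $\tfrac{2s_{\text{all}}M}{(\max X-\min X)N}$ shows these must be balanced with $N\gg M$. Making this quantitative, and in particular securing the uniform domination needed for Scheff\'e on the shrinking bins, is the delicate point, and it is exactly where smoothness (bounded, continuous $ds/dX$) and monotonicity (single-valued $s(X)$, so no cancellation between branches of $\rho$) are both indispensable.
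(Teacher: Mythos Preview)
Your proposal is correct and shares the paper's core ingredients: the identification of $\mu$ as the pushforward of arc length under $X=\log\rho$, the per-bin counting estimate $|\mu_{M,N}(R_i)-ds(R_i)|=O(s_{\text{all}}/N)$, and the coupling requirement $M/N\to0$ (which the paper imposes implicitly through its condition $s_{\text{all}}/N<\varepsilon/(4M)$). The packaging differs slightly. The paper estimates $|\mu_{M,N}([a,b))-ds([a,b))|$ directly for an arbitrary interval via a triangle-inequality $\varepsilon$-argument (bounding the boundary bins by $\max_i ds(R_i)$ and the interior discrepancy by $M\cdot s_{\text{all}}/N$), and only afterwards cites Scheff\'e for the pointwise plot statement. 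You instead first prove pointwise convergence of the densities $f_{M,N}\to e^Y$ and then apply Scheff\'e in the forward direction (pointwise a.e.\ plus matching total mass implies $L^1$) to deduce setwise convergence. Your route is a bit more streamlined and makes the regime $N\gg M$ explicit rather than hidden in the choice of parameters; the paper's direct interval argument avoids appealing to a domination bound but arrives at the same conclusion by the same arithmetic.
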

\begin{proof} 
    By the assumption, there exists a reparametrization $s=s(X)$ of $\gamma(s)$. 
    The line element is given by $ds(dX)=|\frac{ds(X)}{dX}|\,dX=e^YdX=\mu(dX)$.  
    We show that the values of arbitrary $[a,b)\subset \mathbb{R}$ measured by $\mu_{M,N}(dX)$
    and $ds(dX)$ are asymptotically equal as $M,N\rightarrow\infty$. 
    
    For $X=a,b$, let $i_X,j_X$ be the largest integers less than $\frac{X-X_0}{X_M-X_0},\ \frac{N {s(X)}}{s_\text{all}}$, respectively. 
    We have $\mu_{M,N}([a,b))=\frac{(j_b-j_a-1)s_\text{all}}{N}$ by definition of LCH. 
    For any $\varepsilon>0$, one can take sufficiently large $M,N$ so that  
    \begin{align}
        i_a+1<i_b,\label{eN0}
        \\
        \frac{s_\text{all}}{N}<\frac{\varepsilon}{4M},\label{eN1}
        \\
        \frac{X_M-X_0}{M}\max_{\substack{X\in R_i, 0\leq i\leq M}}e^{Y(X)}
        <\frac{\varepsilon}{4}.\label{eN2}
    \end{align}
    Then, the error between $ds([a,b))$ and $\mu_{M,N}([a,b))$ is estimated by 
    \begin{align}\label{ab}
        \bigsqcup_{i=i_a+1}^{i_b}R_i\subset [&a,b)\subset \bigsqcup_{i=i_a}^{i_b+1}R_i.
        \end{align}
By applying $ds$ to \eqref{ab}, we have
        \begin{align}            
        -ds(R_{i_a})<ds([a,b))&-\sum_{i=i_a}^{i_b}ds(R_i)<ds(R_{i_b+1}). \label{dsRi}
        \end{align}
Applying $\mu_{M,N}$ to \eqref{ab} gives
        \begin{align}    \label{muRi}
        -\mu_{M,N}(R_{i_a})<\mu_{M,N}([a,b))&-\sum_{i=i_a}^{i_b}\mu_{M,N}(R_i)<\mu_{M,N}(R_{i_b+1}).
        \end{align}
Since each curve segment is of length $\frac{s_\text{all}}{N}$, we have 
        \begin{align}  
        -\frac{s_\text{all}}{N}<ds(R_i)&-\mu_{M,N}(R_i)<\frac{s_\text{all}}{N}, \label{muconti}
    \end{align}
    as shown in Figure \ref{fig:muconti}. 
\begin{figure}[h]\label{fig:muconti}
    \includegraphics[height=55mm]{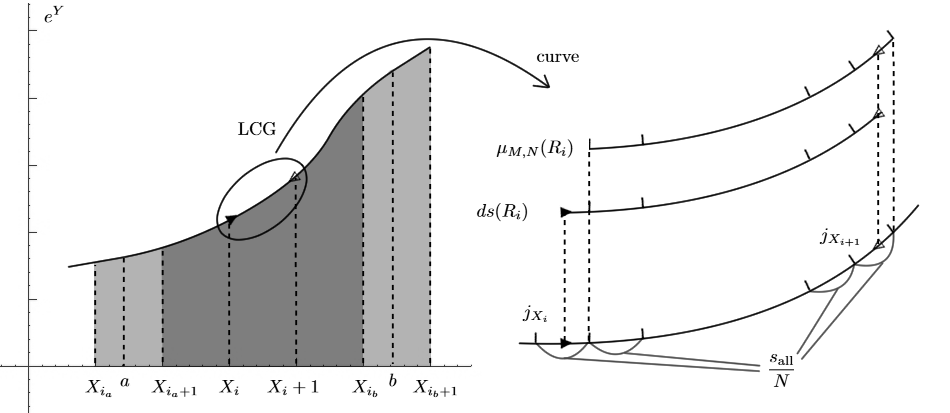}
    \caption{LCH, LCG, and curve: $\mu_{M,N}(R_i)$ counts the number of curve segments whose $X$-values at initial points belong to $R_i$. }
\end{figure}

    On the other hand, for any $i=0,...,M$, from \eqref{eN2} and we have
    \begin{align}\label{eN4}
        0\leq ds(R_i)<\frac{X_M-X_0}{M}\max_{\substack{X\in R_i,0\leq i\leq M}}     e^{Y(X)}<\frac{\varepsilon}{4}.
    \end{align}
    \eqref{eN1} and \eqref{muconti} yield
    \begin{align}\label{eN5}
        0\leq \mu_{M,N}(R_i)<ds(R_i)+\frac{s_\text{all}}{N}<\placket{\frac{1}{4}+\frac{1}{4M}}{\varepsilon}<\frac{2\varepsilon}{4}.
    \end{align}
    The triangle inequality yields
    \begin{align}\label{eN6}
        |ds([a,b))&-\mu_{M,N}([a,b))|\leq \ \\
        &+\sum_{i=i_a}^{i_b} \abslr{ds(R_i)-\mu_{M,N}(R_i)}+\abslr{\mu_{M,N}([a,b))-\sum_{i=i_a}^{i_b}\mu_{M,N}(R_i)}.\nonumber
    \end{align}
    We have from \eqref{eN6} by using \eqref{dsRi} and \eqref{muRi}     
    \begin{align}\label{eN7}
        \abslr{ds([a,b))-\mu_{M,N}([a,b))}< \ &\max_{i}ds(R_i)+\frac{Ms_\text{all}}{N}+\max_i\mu_{M,N}(R_i). 
    \end{align}
    Applying \eqref{eN1}, \eqref{eN4} and \eqref{eN5} to \eqref{eN7}, we conclude that 
    \begin{align}
        \abslr{ds([a,b))-\mu_{M,N}([a,b))}< \ &\frac{\varepsilon}{4}+\frac{\varepsilon}{4}+\frac{2\varepsilon}{4}=\varepsilon.
    \end{align}
    Thus we have a strong convergence. 
    The relation to graph plot refers to \cite{Scheffe1947}.
\end{proof}

For example, Figure \ref{fig:LCHparabola} shows LCHs and the LCG of a parabola. 
In general, the limit of LCH $\Gamma_{M,N}(\gamma)$ as $M,N\rightarrow\infty$ is regarded as the sum of LCG segments $\{\Gamma(\gamma|_{I_k})\mid I=\bigsqcup I_k,\ \rho_\gamma|_{I_k}$: monotonous$\}$.
\begin{figure}[h]\label{fig:LCHparabola}
    \includegraphics[height=120mm]{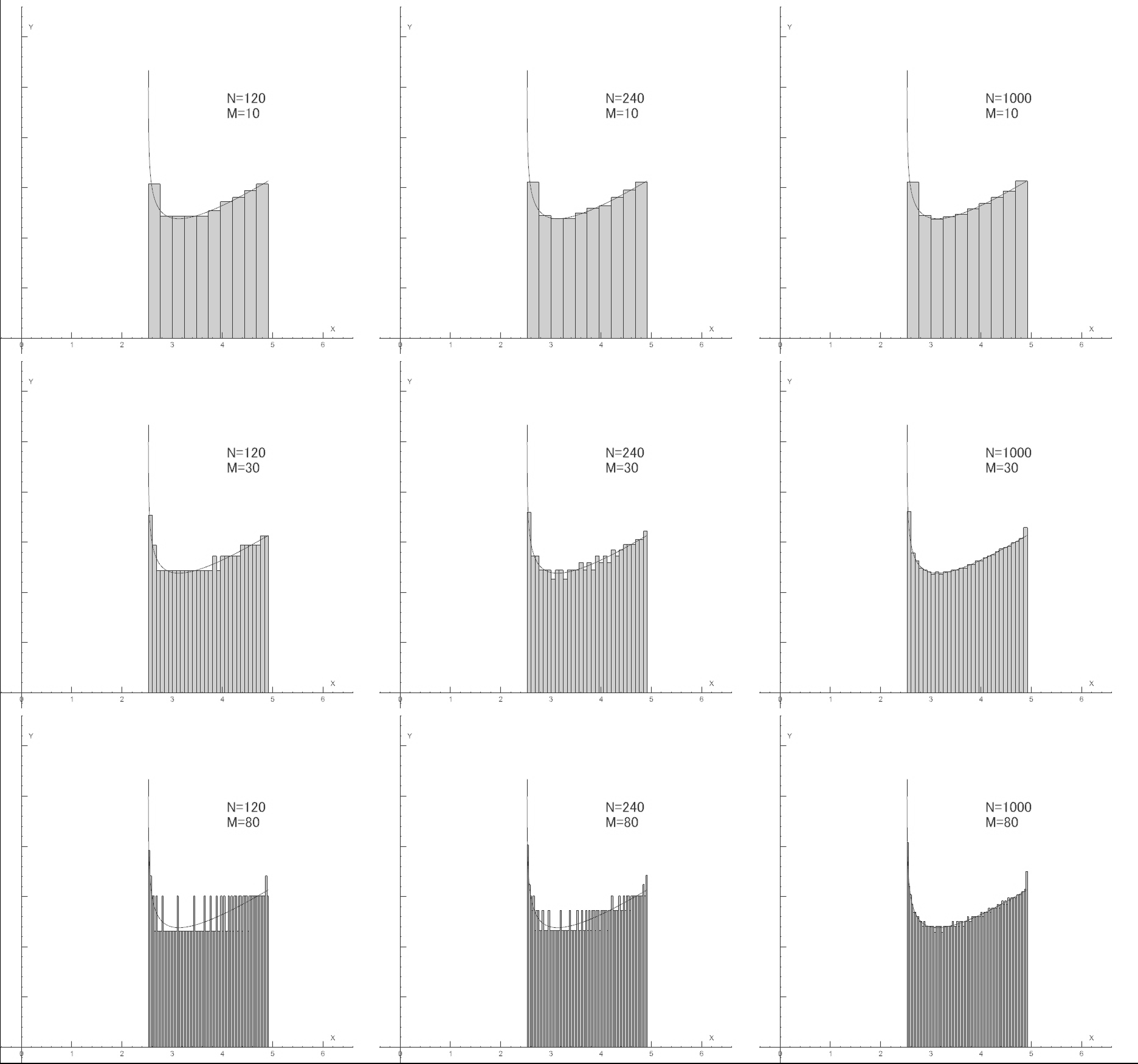}
    \caption{LCH and LCG of parabola $\gamma(t)=5t+\imaginary t^2$, $t\in [0,5]$ for $M=10,30,80$ and $N=120,240,1000$. LCG is represented by $X(t)=\frac{3}{2}\log(4t^2+25)-\log 10$, $Y(t)=\frac{1}{2}\log(4t^2+25)-\log\frac{12t}{4t^2+25}$.  }
\end{figure}
\if0
\begin{proposition}[Gobithaasan, Miura \cite{Gobi2014}]
    The gradient $\lambda(t)$ of the LCG of a curve $\gamma(t)$ is given by 
    \begin{equation}
        \lambda(t)=1+\frac{\rho(t)\ddot s(t)}{\dot\rho(t)\dot s(t)}-\frac{\rho(t)\ddot \rho(t)}{\dot\rho(t)^2}. 
    \end{equation}
\end{proposition}
\begin{proof}
    The derivative of the $Y$-coordinate of the LCG is described by 
    \begin{equation}
        \dot Y(t)=\frac{d}{dt}\placket{\log\frac{ds(t)/dt}{d\log\rho(t)/dt}}=\frac{\dot s(t)\rho(t)}{\dot\rho(t)}
    \end{equation}
    \begin{equation}
        \lambda(t)=\frac{\dot Y(t)}{\dot X(t)}=
    \end{equation}
\end{proof}
\fi

In the next section, we will discuss another self-affinity of curves characterizing the linearity of LCGs, and curves characterized by the Harada self-affinity. 
\section{The Miura and the Harada self-affinties}

\subsection{Log-Aesthetic Curve and the Miura self-Affinity}

Miura \cite{Miura2006} pointed out that a clothoid curve does not possess the HSA, while it has a linear LCG. 
He also defined the following class of curves with linear LCG constructed from the fine-tuning method \cite{MiuraChengWang}. 

\begin{definition}[Log-Aesthetic Curve]
    A \textit{log-aesthetic curve} (LAC) of slope $\alpha$ is a curve defined by
    \begin{align}\label{LACeq}
        \rho(s)=\left\{\begin{array}{ll}
             (\xi s+\eta)^\frac{1}{\alpha} & (\alpha\neq0), \\[2mm]
             e^{\xi s+\eta}&  (\alpha=0),
        \end{array}\right.
    \end{align}
    restricted to $\{s\mid \xi s+\eta\geq 0\}$, 
    where $\xi\in\mathbb{R}\setminus \{0\}$, $\alpha,\eta\in\mathbb{R}$.
    The equation (\ref{LACeq}) determines a unique curve up to congruent transformations by Proposition \ref{FTC}. 
\end{definition}
\begin{example} Figure \ref{fig:LAC} illustrates the following examples of LACs. 
    \begin{enumerate}
        \item A \textit{logarithmic spiral curve} $\gamma(t)=e^{(a+\imaginary b)t}$, $a+\imaginary b\in\mathbb{C}$: observe that
    \begin{equation*}
        s(t)=\sqrt{a^2+b^2}(e^{at}-1),\ \rho(t)=\frac{1}{b}\sqrt{a^2+b^2}e^{at}=\left(\frac{1}{b}s(t)+\frac{\sqrt{a^2+b^2}}{b}\right)^1. 
    \end{equation*}
    It is a LAC with $\alpha=1$. 
    \item A \textit{clothoid curve} $\gamma(t)=\int_{0}^te^{\imaginary at}dt$, $a\neq 0$: observe that
    \begin{equation*}
        s(t)=t,\ \rho(t)=\frac{1}{|2at|}=(2as(t)+0)^{-1}. 
    \end{equation*}
    It is a LAC with $\alpha=-1$. 
    \item A circle and also a line have constant curvatures.     
    They are regarded as the limit of a family of LACs as $\alpha\rightarrow\pm\infty$ \cite{Yoshida2006}. 
    Actually, for any constants $\xi,\eta,\rho_0\in\mathbb{R}$ with $(\xi,\eta)\neq (0,0)$, we have
    \begin{align}
        \lim_{\alpha\rightarrow \pm\infty }(\rho_0^\alpha(\xi s+\eta))^\frac{1}{\alpha}=\rho_0,
        \\
        \lim_{\alpha\rightarrow \pm\infty }\placket{\frac{\xi s+\eta}{\alpha^\alpha}}^\frac{1}{\alpha}=0. 
    \end{align}
    \end{enumerate}
    
\begin{figure}[h]\label{fig:LAC}
    \includegraphics[height=60mm]{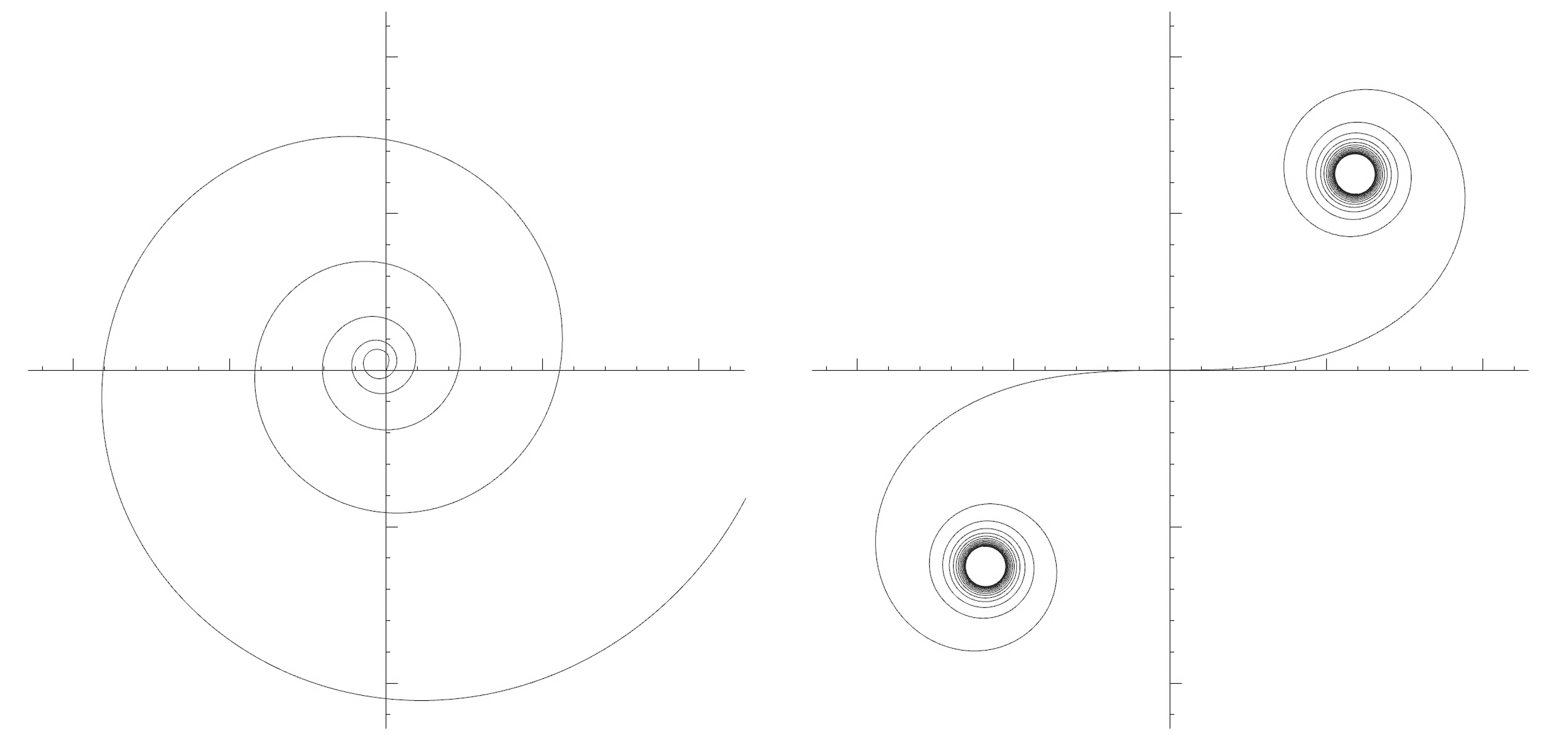}
    \caption{LAC: a logarithmic spiral ($\alpha=1$, left) and a clothoid curve ($\alpha=-1$, right).}
\end{figure}

One can see that the LCG gradient ${dY}/{dX}$ of a LAC is the constant $\alpha$ by \eqref{LACeq} and the formula \cite{Gobi2014} that follows from \eqref{LCG}:
\begin{equation}
        \frac{dY}{dX}(s)=1-\frac{\rho(s)\rho''(s)}{\rho(s)'^2}. \label{LCGgrad}
\end{equation}

\end{example}

We now discuss the self-affinity of LACs. 
We introduce the $\varepsilon$\textit{-shift} mapping which shifts the parameters of curves with their domains and base points shifted accordingly. Namely:  
\begin{definition}
    For any $\varepsilon>0$, we define the $\varepsilon${-shift} mapping $\Lambda_\varepsilon$ on the set of curves by 
    \begin{enumerate}
        \item $I_{\Lambda_\varepsilon\gamma}=I_\gamma-\varepsilon$, \\[-4mm]
        \item $\Lambda_\varepsilon\gamma(t)=\gamma(t+\varepsilon)$ for any $t\in I_{\Lambda_\varepsilon\gamma}$, \\[-4mm]
        \item $\eta_{\Lambda_\varepsilon\gamma}=\eta_\gamma+\varepsilon$,
    \end{enumerate}
     for each curve $\gamma=(I,\gamma(t),\eta)$. 
     We denote $\Lambda_\varepsilon F_\gamma:=F_{\Lambda_\varepsilon\gamma}$ for any function $F_\gamma$ of $\gamma$. 
\end{definition}
In particular, from the setting of arc length parametrization in Lemma \ref{arclength}, the $\varepsilon$-shift of $s$ yields
\begin{align}\label{ALshift}
    \Lambda_\varepsilon s(t)=s(t+\varepsilon)-s(\eta+\varepsilon). 
\end{align}
\begin{definition}[the Miura self-affinity]\label{MSA}
    We say that a curve $\gamma(s):I\rightarrow \mathbb{C}$ possesses \textit{the Miura self-affinity (the MSA)} if there exist $\mu,\nu>0$ and a reparametrization $t(s):I\rightarrow J$ such that for any $\varepsilon>0$, 
    \begin{equation}\label{eqMSA}
        \Lambda_\varepsilon(s_\gamma(t),\rho_\gamma(t))=(\mu^\varepsilon s_\gamma(t),\nu^\varepsilon\rho_\gamma(t)),\ \forall t\in J.
    \end{equation}
\end{definition}
\begin{remark}\label{geomSA}
    Definition \ref{MSA} implies that a curve $\gamma$ with the MSA has the following geometric property: take any subcurve $\gamma^1$. 
Let $\gamma_{a,b}^1$ be a curve obtained by applying arbitrary scale change 
$(\kappa(t),s(t))\mapsto (a\kappa(t),bs(t))$ to $\gamma^1$. 
Then there exists another subcurve $\gamma^2$ congruent to $\gamma^1_{a,b}$ by choosing $b=b(a,\gamma)$ appropriately.

Similarly, the geometric description of the HSA can be stated as follows: take any subcurve $\gamma^1$. 
Let $\gamma_{a,b}^1$ be a curve obtained by applying arbitrary scale change $(\mathrm{Re}\,\gamma^1(t),\mathrm{Im}\,\gamma^1(t))\mapsto (a\,\mathrm{Re}\,\gamma^1(t),b\,\mathrm{Im}\,\gamma^1(t))$. 
Then there exists another subcurve $\gamma^2$ affine equivalent to $\gamma^1_{a,b}$.
\end{remark} 
\begin{remark}
    Note that \eqref{eqMSA} defined by using the map $\Lambda_\varepsilon$ holds for specific parametrization $t(s)$. 
    For example, we will next show that a logarithmic spiral, whose curvature radius is given by $\rho(s)=\xi s+\eta$, possesses the MSA. 
    However, \eqref{eqMSA} does not hold for $t(s)=s$. 
    In fact, the $\varepsilon$-shift of just $s$ yields the following equation different from \eqref{eqMSA}: 
    \begin{align}
        \Lambda_\varepsilon(s,\rho(s))=((s+\varepsilon)-(0+\varepsilon),\xi(s+\varepsilon)+\eta)=(s,\rho(s)+\xi \varepsilon). 
    \end{align}
    The appropriate parametrization will be demonstrated in the proof of Theorem \ref{MSALAC}.
    \end{remark}

\begin{theorem}\label{MSALAC}
    A curve $\gamma:I\rightarrow \mathbb{C}$ possesses the MSA if and only if $\gamma$ is it is either a circle, a line, or a LAC. 
\end{theorem}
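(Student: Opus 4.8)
The plan is to recast the MSA condition \eqref{eqMSA} as a pair of functional equations for the arc length and the curvature radius viewed as functions of the privileged parameter $t$, solve them, and then read off $\rho$ as a function of $s$. Write $S(t):=s_\gamma(t)$ and $R(t):=\rho_\gamma(t)$. Since the $\varepsilon$-shift merely relabels points and $\rho$ is an intrinsic, reparametrization-invariant quantity, we have $\Lambda_\varepsilon R(t)=R(t+\varepsilon)$, whereas \eqref{ALshift} gives $\Lambda_\varepsilon S(t)=S(t+\varepsilon)-S(\eta+\varepsilon)$, where $\eta$ denotes the base point in the $t$-coordinate (so that $S(\eta)=0$). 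Thus \eqref{eqMSA} is equivalent to the system
\[
R(t+\varepsilon)=\nu^\varepsilon R(t),\qquad S(t+\varepsilon)-S(\eta+\varepsilon)=\mu^\varepsilon S(t),
\]
required to hold for all admissible $t$ and all $\varepsilon>0$.

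For the forward implication I would first dispose of the degenerate case $\kappa_\gamma\equiv 0$, which is exactly a line; here the curvature-radius formulation breaks down, so the MSA is handled directly through the geometric picture of Remark \ref{geomSA}, every subcurve of a line being a line segment. Assuming $\kappa_\gamma\neq 0$, so that $R>0$ is smooth, I differentiate the two functional equations in $\varepsilon$ at $\varepsilon=0$. The first gives $R'(t)=(\log\nu)R(t)$, hence $R(t)=C\nu^t$ for some $C>0$; differentiating the second in $t$ first yields $S'(t+\varepsilon)=\mu^\varepsilon S'(t)$, so $S'(t)=D\mu^t$ with $D\neq 0$ (arc length being strictly increasing), and integrating together with $S(\eta)=0$ gives $S(t)=\tfrac{D}{\log\mu}(\mu^t-\mu^\eta)$ when $\mu\neq1$ and $S(t)=D(t-\eta)$ when $\mu=1$.

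It then remains to eliminate $t$ and split into cases. If $\nu=1$ then $R$ is constant and $\gamma$ is a circle. If $\nu\neq1$ but $\mu=1$, then $t$ is affine in $s=S(t)$ and $\rho=C\nu^t$ becomes $\rho(s)=e^{\xi s+\eta'}$, a LAC of slope $\alpha=0$. If $\mu,\nu\neq1$, then $\mu^t$ is affine in $s$, and substituting into $\rho=C\nu^t=C(\mu^t)^{\log\nu/\log\mu}$ yields $\rho(s)=(\xi s+\eta')^{1/\alpha}$ with $\alpha=\log\mu/\log\nu$ after absorbing $C$ into the base; this is a LAC of slope $\alpha\neq0$, matching \eqref{LACeq}. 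In every case the Fundamental Theorem of Curves (Proposition \ref{FTC}) identifies $\gamma$ with the corresponding circle or LAC up to congruence. For the converse I exhibit the parameter witnessing \eqref{eqMSA}: for a LAC with $\alpha\neq 0$ take $t=\log(\xi s+\eta)$ on the region $\xi s+\eta>0$, giving $R(t)=e^{t/\alpha}$ and $S(t)=(e^t-\eta)/\xi$ with $\mu=e,\ \nu=e^{1/\alpha}$; for $\alpha=0$ and for a circle take $t=s$ with $(\mu,\nu)=(1,e^\xi)$ and $(\mu,\nu)=(1,1)$ respectively, and a direct substitution confirms the system above.

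The main obstacle I anticipate is not the functional-equation analysis, which is routine once set up, but the careful bookkeeping of the shift map and the base point in \eqref{ALshift}: the inhomogeneous term $S(\eta+\varepsilon)$ must be tracked precisely, since it is exactly what forces the affine-in-$s$ (rather than merely linear) normalisation of $\mu^t$ and thereby produces the additive constant in the LAC profile \eqref{LACeq}. A secondary subtlety is the line, for which $\rho\equiv\infty$ places it outside the analytic framework and demands the separate geometric treatment, together with the verification that the reparametrizations produced in the converse are genuine (monotone, base-point-preserving) reparametrizations whose domains respect the restriction $\xi s+\eta\ge 0$ built into the definition of a LAC.
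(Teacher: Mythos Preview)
Your proposal is correct and follows essentially the same route as the paper: both directions proceed by differentiating the MSA functional equations in $\varepsilon$ at $\varepsilon=0$, solving the resulting ODEs for $s(t)$ and $\rho(t)$, and then eliminating $t$ with a case split on whether $\mu$ or $\nu$ equals $1$. The only differences are cosmetic: for the arc-length equation you first differentiate in $t$ to kill the base-point term before taking the $\varepsilon$-derivative (the paper differentiates in $\varepsilon$ directly and carries $\dot s(0)$ through), and your explicit witnesses in the converse are specific instances of the one-parameter families the paper writes down (e.g.\ your $t=\log(\xi s+\eta)$ is the paper's $s=\tfrac{\eta}{\xi}(e^{\beta t}-1)$ with $\beta=1$, and your circle uses $(\mu,\nu)=(1,1)$ where the paper uses $(e^\beta,1)$).
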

\begin{proof}
    First, we consider a LAC with $\alpha\neq0$, $\rho(s)=(\xi s+\eta)^\frac{1}{\alpha}$. 
    As mentioned in \cite{Miura2006}, we take a reparameterization $t$ so that $s=\frac{\eta}{\xi}(e^{\beta t}-1)$ for an arbitrary fixed constant $\beta \neq0$. 
    Then, for any $\varepsilon>0$, we have 
    \begin{align}
        \Lambda_\varepsilon s(t)&=s(t+\varepsilon)-s(\varepsilon)=\frac{\eta}{\xi}(e^{\beta (t+\varepsilon)}-e^{\beta \varepsilon})=e^{\beta\varepsilon}s(t).
    \end{align}
    Also, \eqref{LACeq} implies that 
    \begin{align}
       \Lambda_\varepsilon \rho(t)&=\rho(t+\varepsilon)=(\xi s(t+\varepsilon) +\eta)^\frac{1}{\alpha}=(\xi\frac{\eta}{\xi}(e^{\beta(t+\varepsilon)}-1)+\eta)^\frac{1}{\alpha}=e^{\frac{\beta}{\alpha}\varepsilon}\rho(t). 
    \end{align}    
    Thus the curve posseses the MSA with $\mu=e^{\beta}$ and $\nu=e^\frac{\beta}{\alpha}$. 
    
    Second, we consider a LAC with $\alpha=0$, $\rho(s)=e^{\xi s+\eta}$. 
    We take a reparameterization $t=\frac{\beta}{\xi} s$ for an arbitrary fixed $\beta \neq0$. 
    One can easily check that $\Lambda_\varepsilon s(t)=s(t)$ and $\Lambda_\varepsilon \rho(t)=e^{\xi\beta\varepsilon}\rho(t)$, which imply the MSA with $\mu=1$ and $\nu=e^{\beta}$. 
    
    Third, for a circle $\gamma(s)=\rho_0 e^{\imaginary{s}/{\rho_0}}$, take a reparametrization $t$ so that $s(t)=C(e^{\beta t}-1)$ for an arbitrary fixed $C,\beta \neq0$. 
    Then we have the MSA with $\mu=e^{\beta}$ and $\nu=1$. 
    
    Fourth, a straight line possesses the MSA with arbitrary $\mu,\nu\geq 0$. 

    Conversely, if a curve $\gamma(t)$ possesses the MSA, then there exist $\mu,\nu>0$ such that
    \begin{equation}\label{eqMSA2}
        \Lambda_\varepsilon( s(t), \rho(t))=(\mu^\varepsilon s(t) ,\nu^\varepsilon \rho(t)),\ 
    \end{equation}
    for any $\varepsilon>0$.
    Then, taking $\varepsilon$-differential of the first components of both sides of \eqref{eqMSA2} at $\varepsilon=0$ and applying \eqref{ALshift}, we have:
    \begin{align}
        &\lim_{\varepsilon\rightarrow0}\frac{\Lambda_\varepsilon s(t)-s(t)}{\varepsilon}=
        \lim_{\varepsilon\rightarrow0}\frac{s(t+\varepsilon)-s(\eta+\varepsilon)-s(t)+s(\eta)}{\varepsilon}=\dot s(t)-\dot s(0),
        \\
        &\lim_{\varepsilon\rightarrow0}\frac{\mu^\varepsilon-1}{\varepsilon} s(t)=s(t)\log\mu .
\end{align}
By solving $\dot s(t)-\dot s(0)= s(t)\log\mu $, we obtain 
\begin{align}
         s(t)&=
        \begin{cases}\smallskip
            \frac{\dot s_0}{\log \mu}(\mu^t-1) & \text{if $\mu\neq1$}, \\
            {\dot s_0}t & \text{if $\mu=1$}.
        \end{cases}
    \end{align}
The function $\rho(t)$ is determined by similar procedure from the second components of \eqref{eqMSA2} as
    \begin{align}
        &\lim_{\varepsilon\rightarrow0}\frac{\Lambda_\varepsilon \rho(t)-\rho(t)}{\varepsilon}=\lim_{\varepsilon\rightarrow0}\frac{\rho(t+\varepsilon)-\rho(t)}{\varepsilon}=\dot\rho (t),
        \\
        &\lim_{\varepsilon\rightarrow0}\frac{\nu^\varepsilon-1}{\varepsilon} \rho(t)=\rho(t)\log\nu,
\end{align}
so that we have
\begin{align}
         &\rho(t)=
        \begin{cases}
            \rho_0\nu^t & \text{if $\nu\neq1$},\\
             \rho_0 & \text{if $\nu=1$}.
        \end{cases}
    \end{align}
    Thus we obtain  
    \begin{equation}
        \rho(t)=\rho_0\nu^t=\rho_0\mu^{t\log_\mu\nu}=\left(\rho_0^{\log_\nu\mu}\frac{\log\mu}{\dot s_0}s(t)+\rho_0^{\log_\nu\mu}\right)^\frac{1}{\log_\nu\mu}.
    \end{equation}
    If $\nu\neq1$, and hence $\gamma$ is a LAC with $\alpha=\log_\nu\mu$.  
    Otherwise, we obtain a circle or a straight line by Proposition \ref{FTC}. 
\end{proof}
\if0
\begin{proposition}Let $\gamma_i(s_i)\ (i=1,2)$ be a curve whose curvature radius $\rho_i(s_i):=\rho_{\gamma_i}(s_i)$ is monotonous.  Then, the following hold. 
    \begin{enumerate}
        \item If $\Gamma(\gamma_1)=\Gamma(\gamma_2)$, then $\gamma_1$ and $\gamma_2$ are congruent. 
        \item If $\Gamma(\gamma_1)+c=\Gamma(\gamma_2)$ for fixed $c=a+\imaginary b\in\mathbb{C}$, then we have 
        \begin{align}
            \rho_2\placket{\pm e^{b}s_1+\rho_2^{-1}(e^a\rho_1(0))}&=e^a\rho_1(s_1). 
        \end{align}
        \item A curve $\gamma$ is a LAC of slope $\alpha$ if and only if $\Gamma(\gamma)+(\varepsilon,\alpha \varepsilon)=\Gamma(\gamma)$ for any $\varepsilon\in\mathbb{R}$. 
    \end{enumerate} 
\end{proposition}
\begin{proof}
    (1)  By assumption, we can take reparameterizations $s_1=s_1(X)$, $s_2=s_2(X)$. 
    It follows from $\Gamma(\gamma_1)=\Gamma(\gamma_2)$ that 
    \begin{align}
        \log\abslr{\frac{ds_1}{dX}}&=\log\abslr{\frac{ds_2}{dX}}, 
        \\
        s_2(X)&=\pm s_1(X)+C. \label{s1s2}
    \end{align}
    Also, we have 
    \begin{align}
        e^X=\rho_1(s_1(X))&=\rho_2(s_2(X))\\
        &=\rho_2(\pm s_1(X)+C).
    \end{align}
    By changing the base point and the direction if necessary, we have the conclusion by Proposition \ref{FTC}. 

    (2) By assumption, we can take reparameterizations $s_1=s_1(X_1)$, $s_2=s_2(X_2)$ where $X_1=\log\rho_1(s_1)$, $X_2=\log\rho(s_2)$. 
    By $\Gamma(\gamma_1)+c=\Gamma(\gamma_2)$, we may set $X_2(X_1):=X_1+a$ as a reparametrization of $\gamma_2$ such that 
    \begin{align}
        \log\rho_2(s_2(X_2))&=\log\rho_1(s_1(X_1))+a,
        \\
        \log\abslr{\frac{ds_2(X_2)}{dX_2}}&=\log\abslr{\frac{ds_1(X_1)}{dX_1}}+b, 
    \end{align}
    which yields
    \begin{align}
        \rho_2(s_2(X_2))&=e^a\rho_1(s_1(X_1)),\\
        s_2(X_2)&=\pm e^{b}s_1(X_1)+C, \label{s1s2}
        \\
        \rho_2(\pm e^{b}s_1(X_1)+C)&=e^a\rho_1(s_1(X_1)). 
    \end{align}
    By choosing $X_1=x_1$ so that $s_1(x_1)=0$, we have $\rho_2(C)=e^a\rho_1(0)$.  

    (3) If $\gamma $ is a LAC, then the LCG $\Gamma(\gamma)$ is a line of slope $\alpha$ by \eqref{LCGgrad}, which yields $\Gamma(\gamma)+(\varepsilon,\alpha \varepsilon)=\Gamma(\gamma)$ for any $\varepsilon\in\mathbb{R}$. 
    Conversely, if $\Gamma(\gamma)+(\varepsilon,\alpha \varepsilon)=\Gamma(\gamma)$ for any $\varepsilon\in\mathbb{R}$, the LCG $\Gamma(\gamma)$ is a line of slope $\alpha$. 
    Since the LCG of a LAC of slope $\alpha$ coincide with $\Gamma(\gamma)$ up to shift of $X$-coordinates (equivalently )
    We see that $X=\log\rho(s)$ is monotonous because $e^Y=\frac{ds}{dX}$ is non-vanishing.  
    
\end{proof}

\fi

\if0
\begin{proposition}
    \begin{enumerate}
        \item $\gamma$ is a LAC of slope $\alpha$ if and only if $\Gamma(\gamma)+(\varepsilon,\alpha \varepsilon)=\Gamma(\gamma)$ for any $\varepsilon\in\mathbb{R}$. 
        \item Let $\gamma_i(s_i)\ (i=1,2)$ be a curve whose curvature radius $\rho_i(s_i):=\rho_{\gamma_i}(s_i)$ is monotonous. 
        For fixed $\varepsilon\in\mathbb{R}$, $\Gamma(\gamma_1)+(\varepsilon,\alpha \varepsilon)=\Gamma(\gamma_2)$ implies that 
    \begin{align}
        \rho_2\placket{e^{\alpha \varepsilon}s_1+\rho_2^{-1}(e^{\varepsilon}\rho_1(0))}&=e^\varepsilon\rho_1(s_1).\label{rho1rho2}
    \end{align}
    \end{enumerate} 
\end{proposition}
\begin{proof}
    (1) If $\gamma $ is a LAC, then the LCG $\Gamma(\gamma)$ is a line of slope $\alpha$ by \eqref{LCGgrad}, which yields $\Gamma(\gamma)+(\varepsilon,\alpha \varepsilon)=\Gamma(\gamma)$ for any $\varepsilon\in\mathbb{R}$. 
    Conversely, suppose that $\Gamma(\gamma)+(\varepsilon,\alpha \varepsilon)=\Gamma(\gamma)$ for any $\varepsilon\in\mathbb{R}$.
    Then, the LCG $\Gamma(\gamma)$ is a line of slope $\alpha$, and 
    
    $\Gamma(\gamma)+(\varepsilon,\alpha \varepsilon)=\Gamma(\gamma)$ for any $\varepsilon\in\mathbb{R}$.  
    (2) By assumption, we can take reparameterizations $s_1=f(X_1)$, $s_2=g(X_2)$ where $X_1=\log\rho_1(s_1),X_2=\log\rho_2(s_2)$. 
    Since $\Gamma(\gamma_1)+(\varepsilon,\alpha \varepsilon)=\Gamma(\gamma_2)$, we may set $X_2=X_1+\varepsilon$ as a one-to-one correspondence between $\Gamma_1$ and $\Gamma_2$ such that
    \begin{align}
        \log\frac{dg(X_2)}{dX_2}&=\log\frac{df(X_1)}{dX_1}+\alpha\varepsilon, 
        \\
        g(X_2)&=e^{\alpha\varepsilon}f(X_1)+C. \label{s1s2}
    \end{align}
    If we choose the value $X_1=x_1:=\log\rho_1(0)$, we have $s_1=g(x_1)=0$ by definition. 
    On the other hand, we have $X_2=x_1+\varepsilon$ and $s_2=g(x_1+\varepsilon)=e^{\alpha\varepsilon}\cdot 0+C=C$ by \eqref{s1s2}.  
    Since $X_2=\log\rho_2(s_2)$ by definition, we conclude $\log\rho_1(0)+\varepsilon=\log\rho_2(C)$ and $C=\rho_2^{-1}(e^{\varepsilon}\rho_1(0))$. 
    
    If $\gamma_1=\gamma_2$, then Proposition \ref{FTC} determines a common function $\rho(s)$ such that $\rho_1(s_1)=\rho(s_1)$ and $\rho_2(s_2)=\rho(s_2)$. Thus \eqref{rho1rho2} yields 
    \begin{align}
        \rho\placket{e^{\alpha \varepsilon}s_1+\rho^{-1}(e^{\varepsilon}\rho_1(0))}&=e^\varepsilon\rho(s_1), 
    \end{align}
\end{proof}
\fi
\subsection{The Harada self-affinity}\label{subsec:HSA}
Definition \ref{HSA_original} is formulated as follows. 
\begin{definition}\label{HSA}
    We say that a curve $\gamma(s):I\rightarrow \mathbb{C}$ posseses the \textit{the Harada self-affinity (the HSA)} if for any subinterval $J\subset I$ (homeomorphic to $I$), there exists a pair $(\sigma_J, F_J)$ of a reparametrization $\sigma_J:I\rightarrow J$ and an affine map $F_J:z\mapsto A_J z+b_J$ in $\mathrm{Aff}(\mathbb{C})$ such that 
    \begin{equation}\label{HSAeq}
        \gamma(\sigma_J(t))=F_J\gamma(t),\ \forall t\in I. 
    \end{equation}
\end{definition}
For geometric description of the HSA compared with the MSA, we refer to Remark \ref{geomSA}. 

\begin{remark}\label{remHSA}
    Let $\gamma(s)=x(s)+\imaginary y(s):I\rightarrow \mathbb{C}$ be a curve with the HSA. 
    Then, the following holds for any subinterval $J\subset I$. 
    \begin{enumerate}
    \item The arc length parameterization of the curve $F_J \gamma(\tau_J (s)):J\rightarrow\mathbb{C}$ is given by $\tau_J :=\sigma_J ^{-1}:J\rightarrow I$. 
    For two possible $\sigma_J=\sigma_J^{(1)},\sigma_J^{(2)}$, since both of their inverse functions are arc length parameters $F_J\gamma$, we have $(\sigma_J^{(1)})^{-1}(s)=\pm(\sigma_J^{(2)})^{-1}(s)+\eta$ for some $\eta\in\mathbb{R}$. 
    In this sense, $\sigma_J$ is uniquely determined by $J$. 
    \item 
    An affine map $F_J$ is unique up to set-wise automorphisms (not giving pointwise correspondence but curve-to-curve correspondence) in $\mathrm{Aut}(\gamma(I)):=\{G\in\mathrm{Aff}(\mathbb{C})\mid G\gamma(I)=\gamma(I)\text{ as sets}\}$. 
    We assume that $J\mapsto (\sigma_J,F_J)$ is well-defined modulo half-translations and $\mathrm{Aut}(\gamma(I))$. 
    \item For any $G\in \mathrm{Aff}(\mathbb{C})$, one can see that the curve $G\gamma$ posseses the HSA by replacing $F_J$ with $GF_JG^{-1}$. 
    Considering curves modulo $\mathrm{Aff}(\mathbb{C})$, the parameter $s$ does not work as an arc length parameter. 
    We use the variables $t,u=\sigma_J(t)$ to represent parameters in $I,J$, respectively.  
    Up to scaling and translation, we can regard $u$ lying on $[0,1]$ without loss of generality. 
    \item The affine map $F_J$ acts on the gradient $z(t):=\frac{dy}{dx}(t)=\frac{dy(t)/dt}{dx(t)/dt}$ by injective \textit{M\"obius transformation}
    \begin{equation}
        M_{A_J}(z)=\frac{a+bz}{c+dz},\ A_J=\begin{pmatrix}
        a& b\\ c&d
    \end{pmatrix}.
    \end{equation}
    If $\gamma(s)$ is not a line, there exists a point such that 

    \begin{equation}
        z'=\frac{x'y''-x''y'}{x'^2}=\frac{\kappa}{x'^2}
    \end{equation}
    does not vanish. 
    Thus $z$ is locally injective by the inverse function theorem, and should be so in the whole $I$ by the HSA. 
    In particular, if a curve has a nontrivial winding index (like LACs) for which $\theta(s)=\arctan z(s)$ is not injective, then it no longer possesses the HSA. 
    \end{enumerate}
\end{remark}
We now establish the following theorem. 
\begin{theorem}\label{HSAparabola}
A curve possesses the Harada self-affinity if and only if it is either a line or a parabola. 
\if0
    The only normalized curve with the Harada self-affinity is the parabola
    \begin{align}
        y(x)=h_{z_0}(x):=
            {2z_0}\placket{\sqrt{2(1-x)}+x-1},\ x\in[-1,1],\ z_0\in\mathbb{R}. \label{HSAparabola}
    \end{align}
    In particular, a curve possesses the Harada self-affinity if and only if it is either a straight line or a parabola.
\fi
\end{theorem}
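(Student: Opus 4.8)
The plan is to establish both implications, beginning with the elementary direction. A line has the HSA trivially, since every subarc is an affine rescaling of the whole. For a parabola, after the affine normalization permitted by Remark~\ref{remHSA}(3) I may assume $\gamma$ traces an arc of $y=\tfrac12x^2$, and I would exhibit the explicit two-parameter family of affine self-maps $F_{\lambda,\mu}\colon(x,y)\mapsto(\lambda x+\mu,\ \lambda\mu\,x+\lambda^2y+\tfrac12\mu^2)$ with $\lambda\neq0$. Each $F_{\lambda,\mu}$ carries the parabola into itself and acts on the slope $z=dy/dx=x$ by the affine map $z\mapsto\lambda z+\mu$; choosing $(\lambda,\mu)$ so that this slope map sends the full slope interval onto that of a prescribed subarc $J$ yields the pair $(\sigma_J,F_J)$ required by Definition~\ref{HSA}. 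Hence every parabolic arc has the HSA.

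For the converse I would assume $\gamma$ possesses the HSA and is not a line, and deduce that it is a parabola. By Remark~\ref{remHSA}(4) the slope $z(t)=dy/dx$ is strictly monotone, so I may parametrize $\gamma$ by $z$ and record that each $F_J$ acts on the slope by an injective M\"obius transformation $M_{A_J}$ carrying the full slope interval into that of $J$. The subarcs $J\subset I$ form a two-parameter family, indexed by their endpoints, and since $F_J\gamma(I)=\gamma(J)$ recovers $J$, the correspondence $J\mapsto F_J$ is genuinely two-parameter; moreover $F_I=\mathrm{id}$, because a non-line curve contains three non-collinear points and the only affine map fixing them is the identity.

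The heart of the argument is to differentiate this family at $J=I$. As each $F_J$ maps $\gamma$ into itself, every velocity field $V=\tfrac{d}{d\tau}F_{J(\tau)}\big|_{\tau=0}$ is an affine vector field \emph{tangent} to $\gamma$, and the two-parameter family yields a two-dimensional space $\mathfrak g\subset\mathfrak{aff}(2,\mathbb R)$ of such fields. Since vector fields tangent to a submanifold are closed under the Lie bracket, $\mathfrak g$ is a two-dimensional subalgebra whose local flow preserves $\gamma$, so $\gamma$ is an orbit of a two-dimensional affine group; being homogeneous it has constant equiaffine curvature and therefore lies on a line or a conic. Lines are excluded by hypothesis and ellipses by the monotonicity of the slope, while a short computation shows that the affine-symmetry algebra of a hyperbola is only one-dimensional and so cannot accommodate a two-dimensional $\mathfrak g$; only the parabola, whose affine-symmetry algebra is two-dimensional, survives. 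As a check I would verify that the surviving one-parameter contractions fix the vertical-tangent direction $z=\infty$, so that the induced slope maps are affine in $z$, exactly matching the normal form of the first paragraph.

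A concrete alternative to this classification is to solve the self-affinity equation outright: normalizing $\gamma$ on $I=[-1,1]$ with base point at the origin and pinning down the residual freedom modulo $\mathrm{Aut}(\gamma(I))$, the uniqueness of $(\sigma_J,F_J)$ from Remark~\ref{remHSA}(1)--(2) turns the HSA for the one-parameter family of subarcs with a fixed endpoint into a functional equation for the graph $y=f(x)$, whose solution is the arc $y=2z_0\bigl(\sqrt{2(1-x)}+x-1\bigr)$; eliminating the radical exhibits a quadratic relation, confirming a parabola. I expect the principal obstacle to be precisely the discrimination among conics, that is, proving that the induced slope M\"obius maps must fix the point at infinity so that neither an ellipse nor a proper hyperbolic arc can occur. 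The other delicate point is the bookkeeping of the nonuniqueness of $F_J$ modulo $\mathrm{Aut}(\gamma(I))$, which must be controlled so that the differentiation in the previous paragraph produces a well-defined two-dimensional $\mathfrak g$.
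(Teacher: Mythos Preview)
Your approach is genuinely different from the paper's. The paper proceeds by direct calculation: after normalizing to $-\gamma(0)=\gamma(1)=1$ and $\gamma'(1)=\imaginary$, it splits $I=[0,1]$ at a variable point $p$, uses Corollary~\ref{corsigma} and the curvature-transformation formula of Proposition~\ref{deformation_circle} to pin down the matrices $\check A_p,\hat A_p$ up to two unknown entries, and then the fixed-point relations at the endpoints reduce everything to the first-order linear ODE $\tfrac{dy}{dx}+\tfrac{y}{1-x}=\tfrac{2z_0\beta(x)}{1-x}$, whose solution is the parabola $y=2\sqrt{2}\,z_0\sqrt{1-x}+C(1-x)$. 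No Lie theory enters. Your ``concrete alternative'' is essentially this computation, and the closed form you quote is exactly the paper's normalized answer. Your main Lie-theoretic line is more conceptual and buys a cleaner explanation of \emph{why} the parabola is singled out (it is the unique non-linear planar curve with a two-dimensional affine symmetry group), whereas the paper's ODE route is self-contained and sidesteps any regularity discussion of $J\mapsto F_J$.

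There is, however, a genuine gap in your main argument. The assertion ``being homogeneous it has constant equiaffine curvature'' does not follow as stated, because your two-dimensional group $G=\exp\mathfrak g$ need not lie in the equiaffine group $G^{\mathrm{SA}}$, and $\kappa^{\mathrm{SA}}$ is only invariant under area-preserving maps; the principal obstacle is therefore not the discrimination among conics but the step just before it. Two repairs are available. First, observe that $G\cap G^{\mathrm{SA}}$, being the kernel of $\det$ restricted to $G$, has dimension at least one; since a nontrivial affine map fixing a non-line arc pointwise must be the identity, this subgroup cannot sit inside every isotropy group and hence already acts transitively on $\gamma$, giving a one-parameter \emph{equiaffine} symmetry and thus constant $\kappa^{\mathrm{SA}}$. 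Alternatively, analyse the one-dimensional isotropy $H_p$ directly in coordinates adapted to the tangent at $p$: preserving a smooth germ $y=f(x)$ with $f(0)=f'(0)=0$ and $f''(0)\neq0$ forces $H_p$ to act by $(x,y)\mapsto(\lambda x,\lambda^2 y)$, so $f(x)=cx^2$ and $\gamma$ is locally, hence globally, a parabola. Either route closes the gap. The smoothness of $J\mapsto F_J$ that you flag is comparatively mild: monotonicity of the slope makes $\gamma$ an embedded arc, so $F_J$ is determined by the images of three fixed non-collinear points on $\gamma$, and these vary smoothly with the endpoints of $J$.
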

    Figure \ref{fig:HSA} shows the HSA of lines and parabolas. 
    In the figure, each subcurve is the image of the affine map defined by the bounding parallelograms. 
    The bounding parallelogram of $y=x^2$, $a\leq x\leq b$ is spanned by $b-a+2\imaginary ab$ and $\imaginary (a+b)^2$.\textbf{}
\begin{figure}[h]\label{fig:HSA}
    \includegraphics[height=60mm]{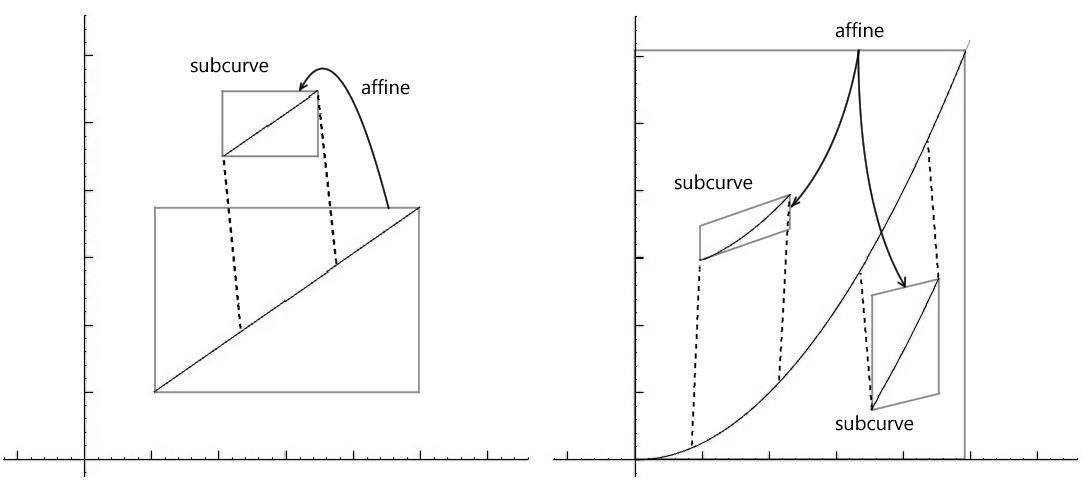}
    \caption{the Harada self-affinity. 
    }
\end{figure}

In order to prepare for the proof, we consider the following setting for a technical reason. 
We first separate the interval into two subintervals and observe equilibria for the affine transformation $F_J$ associated with subinterval $J$. 
\begin{definition}
    Let $\gamma(u):[0,1]\rightarrow \mathbb C$ possess HSA. 
    For any fixed $p \in I$, let $\check I :=[0,p ]\subset I$ and denote by $\check \sigma, \check \tau, \check F, \check A, \check b$ the corresponding ones $s_{\check I}, t_{\check I}, F_{\check I}, A_{\check I}, {b}_{I_p }$, respectively. 
    We define $\hat{I} :=[p ,1]$ and $\hat \sigma, \hat \tau, \hat F, \hat A, \hat b $ in the same way. 
\end{definition}
We note that an arbitrary subinterval $J=[a,b]$ is represented by 
\begin{equation}
    \sigma_{[0,b]}\circ\sigma_{[a',1]}([0,1])=[a,b],
\end{equation}
where $a'=\sigma_{[0,b]}^{-1}(a)$. 
We can deal with the HSA by considering the above setting without loss of generality. 

By definition \eqref{HSAeq}, we have
\begin{align}
    \gamma(\check \sigma  (t))=\check A \gamma(t)+\check b ,\quad\check \sigma  (0)=0,\ \check\sigma(1)=p,\label{eq1_sigma}
    \\
    \gamma(\hat\sigma  (t))=\hat A \gamma(t)+\hat b , \quad\hat\sigma (0)=p,\ \hat\sigma(1)=1\label{eq1^sigma}.
\end{align}
Substituting $t=0,1$ into \eqref{eq1_sigma} and \eqref{eq1^sigma} respectively, we have 
\begin{align}
    \gamma(0)=\check A \gamma(0)+\check b ,\quad\gamma(p )=\check A \gamma(1)+\check b ,\label{eq2_sigma}
    \\
    \gamma(p )=\hat A \gamma(0)+\hat b , \quad\gamma(1)=\hat A \gamma(1)+\hat b , \label{eq2^sigma}
\end{align}
which gives
\begin{align}
    \gamma(p )-\gamma(0)&=\check A (\gamma(1)-\gamma(0)),\label{eq3_sigma}
    \\
    \gamma(1)-\gamma(p )&=\hat A (\gamma(1)-\gamma(0)). \label{eq3^sigma}
\end{align}
\begin{lemma}\label{lemsymmetry}
The following hold. 
\begin{align}
    &(\check A -\hat A )\frac{\gamma(1)-\gamma(0)}{2}=\gamma(p )+\frac{\gamma(1)+\gamma(0)}{2},\label{eq4sigma}\\
    &(\check A +\hat A -I)\frac{\gamma(1)-\gamma(0)}{2}=0,\label{eq5sigma}\\
    &\check A \frac{\gamma(1)+\gamma(0)}{2}+\hat A \frac{\gamma(1)-\gamma(0)}{2}=-\check b ,\label{eq6_sigma}\\
    &\check A \frac{\gamma(1)-\gamma(0)}{2}+\hat A \frac{\gamma(1)+\gamma(0)}{2}=\hat b. \label{eq6^sigma}
\end{align}
\end{lemma}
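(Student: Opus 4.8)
The four identities are purely linear-algebraic consequences of the endpoint relations \eqref{eq2_sigma}--\eqref{eq3^sigma}, so the plan is to form suitable symmetric and antisymmetric combinations of those equations rather than to introduce anything new. First I would normalize: since the HSA is preserved under affine conjugation (Remark~\ref{remHSA}(3)), I may apply a translation so that the midpoint of the chord sits at the origin, i.e. $\gamma(0)+\gamma(1)=0$. This makes $\frac{\gamma(1)+\gamma(0)}{2}$ vanish, and it is precisely what allows the otherwise inhomogeneous right-hand sides to collapse to $\pm\check b$ and $\hat b$.

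For \eqref{eq5sigma} and \eqref{eq4sigma} I would work directly from the two difference relations \eqref{eq3_sigma} and \eqref{eq3^sigma}. Adding them, the intermediate value $\gamma(p)$ telescopes away, the left-hand sides combine to $(\check A+\hat A)(\gamma(1)-\gamma(0))$, and the right-hand side is exactly $\gamma(1)-\gamma(0)$; dividing by two yields \eqref{eq5sigma}, which in fact holds with no normalization at all. Subtracting \eqref{eq3^sigma} from \eqref{eq3_sigma} instead produces $(\check A-\hat A)(\gamma(1)-\gamma(0))=2\gamma(p)-\gamma(0)-\gamma(1)$, which is \eqref{eq4sigma} after dividing by two and invoking the midpoint normalization.

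For \eqref{eq6_sigma} and \eqref{eq6^sigma} I would use the value relations \eqref{eq2_sigma} and \eqref{eq2^sigma} to eliminate the products $\check A\gamma(0),\check A\gamma(1)$ and $\hat A\gamma(0),\hat A\gamma(1)$ in favour of the translation vectors: \eqref{eq2_sigma} gives $\check A\gamma(0)=\gamma(0)-\check b$ and $\check A\gamma(1)=\gamma(p)-\check b$, while \eqref{eq2^sigma} gives $\hat A\gamma(0)=\gamma(p)-\hat b$ and $\hat A\gamma(1)=\gamma(1)-\hat b$. Substituting these into the symmetric and antisymmetric combinations $\check A\frac{\gamma(1)+\gamma(0)}{2}+\hat A\frac{\gamma(1)-\gamma(0)}{2}$ and $\check A\frac{\gamma(1)-\gamma(0)}{2}+\hat A\frac{\gamma(1)+\gamma(0)}{2}$ and simplifying, the curve values cancel in pairs and, after imposing $\gamma(0)+\gamma(1)=0$, one is left with $-\check b$ and $\hat b$ respectively.

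The argument is entirely routine and there is no conceptual obstacle; the only points that require care are the choice of origin at the chord midpoint---without which the right-hand sides retain an extra $\frac{\gamma(0)+\gamma(1)}{2}$ term---and the sign bookkeeping when splitting into symmetric and antisymmetric parts. I would therefore state the normalization explicitly at the outset, so that the four identities come out exactly as written.
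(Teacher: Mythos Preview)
Your proposal is correct and follows essentially the same approach as the paper, which derives \eqref{eq4sigma} and \eqref{eq5sigma} by subtracting and adding \eqref{eq3_sigma}, \eqref{eq3^sigma}, and obtains \eqref{eq6_sigma}, \eqref{eq6^sigma} from \eqref{eq2_sigma}, \eqref{eq2^sigma} together with the difference relations. Your remark that the midpoint normalization $\gamma(0)+\gamma(1)=0$ is needed for \eqref{eq4sigma}, \eqref{eq6_sigma}, \eqref{eq6^sigma} to hold exactly as stated is in fact more careful than the paper, which only imposes this normalization in the subsequent Corollary~\ref{corsigma}.
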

\begin{proof} \eqref{eq4sigma} follows by subtracting \eqref{eq3^sigma} from \eqref{eq3_sigma}. 
We obtain \eqref{eq5sigma} by adding \eqref{eq3_sigma} to \eqref{eq3^sigma}. 
\eqref{eq6_sigma} and \eqref{eq6^sigma} follow from \eqref{eq2_sigma}, \eqref{eq3_sigma} and \eqref{eq2^sigma}, \eqref{eq3^sigma}, respectively. 
\end{proof}

\begin{corollary}\label{corsigma}
If $-\gamma(0)=\gamma(1)=1\in\mathbb{R}$, the following hold. 
    \begin{enumerate}
        \item $        (\check A -\hat A )\cdot1=\gamma(p )=\check b +\hat b $
        \item $(\check A +\hat A )\cdot1=1$
        \item $\check b =-\hat A \cdot1=\frac{1}{2}(\gamma(p )-1)$, $\hat b =\check A  \cdot1=\frac{1}{2}(\gamma(p )+1)$
    \end{enumerate}
\end{corollary}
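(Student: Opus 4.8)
The plan is to read Corollary \ref{corsigma} off Lemma \ref{lemsymmetry} by specializing the four identities \eqref{eq4sigma}--\eqref{eq6^sigma} to the normalized endpoints, followed by a small linear solve. I would first record that the hypothesis is no genuine restriction: by Remark \ref{remHSA}(3) the HSA is preserved under any $G\in\mathrm{Aff}(\mathbb{C})$, so whenever $\gamma(0)\neq\gamma(1)$ one may apply the affine map carrying $\gamma(0)\mapsto-1$ and $\gamma(1)\mapsto1$ and thereby assume $-\gamma(0)=\gamma(1)=1$. Under this normalization, viewing $1=(1,0)$ and $-1=(-1,0)$ in $\mathbb{C}\cong\mathbb{R}^2$, the two combinations that recur throughout Lemma \ref{lemsymmetry} collapse to
\begin{equation*}
    \frac{\gamma(1)-\gamma(0)}{2}=1,\qquad \frac{\gamma(1)+\gamma(0)}{2}=0 .
\end{equation*}

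Substituting these into \eqref{eq4sigma}--\eqref{eq6^sigma} is then immediate. Identity \eqref{eq4sigma} becomes $(\check A-\hat A)\cdot1=\gamma(p)$, and \eqref{eq5sigma} becomes $(\check A+\hat A-I)\cdot1=0$, i.e.\ $(\check A+\hat A)\cdot1=1$ since $I\cdot1=1$; these are the first equality of (1) and the whole of (2). Because the $0$-weighted terms drop out, \eqref{eq6_sigma} and \eqref{eq6^sigma} reduce to $\hat A\cdot1=-\check b$ and $\check A\cdot1=\hat b$, which are exactly the defining relations $\check b=-\hat A\cdot1$, $\hat b=\check A\cdot1$ of (3). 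Here I would stress that $\check A\cdot1$ denotes the image of the vector $1=(1,0)$, i.e.\ the first column of $\check A$, and likewise for $\hat A$.

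It then remains to solve for these first columns and to produce the explicit forms. The two relations $(\check A-\hat A)\cdot1=\gamma(p)$ and $(\check A+\hat A)\cdot1=1$ form a nondegenerate linear system in the unknowns $\check A\cdot1$ and $\hat A\cdot1$ (coefficient determinant $2$), whose solution is
\begin{equation*}
    \check A\cdot1=\tfrac{1}{2}\big(\gamma(p)+1\big),\qquad \hat A\cdot1=\tfrac{1}{2}\big(1-\gamma(p)\big).
\end{equation*}
Feeding this back into $\hat b=\check A\cdot1$ and $\check b=-\hat A\cdot1$ yields $\hat b=\tfrac12(\gamma(p)+1)$ and $\check b=\tfrac12(\gamma(p)-1)$, which completes (3), and adding them gives $\check b+\hat b=\gamma(p)$, the remaining equality of (1).

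I do not expect a genuine obstacle, since the corollary is a pure specialization of Lemma \ref{lemsymmetry} followed by a $2\times2$ linear solve. The only points requiring care are bookkeeping ones: reading $\check A\cdot1,\hat A\cdot1$ as first columns (images of $1=(1,0)$) rather than as scalar products, using $I\cdot1=1$, and recording that the normalization presupposes $\gamma(0)\neq\gamma(1)$ so that the preliminary affine reduction is available. The degenerate case $\gamma(0)=\gamma(1)$ (a closed configuration) would have to be flagged separately, but it does not affect the derivation under the stated hypothesis.
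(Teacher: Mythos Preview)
Your proposal is correct and follows essentially the same route as the paper: specialize Lemma \ref{lemsymmetry} to the normalized endpoints and read off the identities. The only cosmetic difference is that the paper obtains the explicit values $\check A\cdot1=\tfrac12(\gamma(p)+1)$ and $\hat A\cdot1=\tfrac12(1-\gamma(p))$ by re-evaluating \eqref{eq1_sigma}--\eqref{eq1^sigma} at $t=0,1$ (i.e.\ directly from \eqref{eq3_sigma}--\eqref{eq3^sigma}) rather than by solving your $2\times2$ linear system from (1) and (2), but the two computations are equivalent.
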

\begin{proof}
    Substituting $\gamma(1)-\gamma(0)=2$, $\gamma(1)+\gamma(0)=0$ into Lemma \ref{lemsymmetry}, we get (1), (2), $\check b =-\hat A \cdot 1$ and $\hat b=\check A \cdot 1$. 
    By \eqref{eq1_sigma} and \eqref{eq1^sigma}, we have
    \begin{align}
        \gamma(p )+1&=[\gamma(\check \sigma  (t))]_{0}^{1}=[\check A \gamma(t)+\check b ]_{0}^{1}=2\check A  \cdot1,\\
        1-\gamma(p )&=[\gamma(\hat\sigma  (t))]_{0}^{1}=[\hat A \gamma(t)+\hat b ]_{0}^{1}=2\hat A  \cdot1.
    \end{align}
    This completes the proof. 
\end{proof}
Differentiating \eqref{eq1_sigma} and \eqref{eq1^sigma} by $t$, we have
\begin{align}\label{eqdt_sigma}
    \gamma'(\check \sigma  (t))\frac{\partial \check \sigma  (t)}{\partial t}&=\check A \gamma'(t),\\
    \gamma'(\hat\sigma  (t))\frac{\partial \hat\sigma  (t)}{\partial t}&=\hat A \gamma'(t). \label{eqdt^sigma}
\end{align}

\begin{proposition}\label{bdryrigidity}
    If a curve $\gamma(u):[0,1]\rightarrow\mathbb{C}$ posseses the HSA and either $\gamma'(0)$ or $\gamma'(1)$ is parallel to $ \gamma(1)-\gamma(0)$, $\gamma$ is a line segment whose image is $[-1,1]$. 
\end{proposition}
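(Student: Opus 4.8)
The plan is to reduce everything to the normalized situation of Corollary \ref{corsigma}. Since the HSA is preserved under affine maps (Remark \ref{remHSA}(3)), and both the hypothesis (``$\gamma'(0)$ or $\gamma'(1)$ parallel to $\gamma(1)-\gamma(0)$'') and the conclusion (``$\gamma$ is a line segment'') are affine-invariant, I would first apply an affine transformation so that $\gamma(0)=-1$ and $\gamma(1)=1$. Then the chord $\gamma(1)-\gamma(0)=2$ points along the real axis, and the hypothesis becomes: the real direction $1\in\mathbb{C}$ is parallel to $\gamma'(0)$ (or to $\gamma'(1)$). Note that this normalization is available precisely because the hypothesis already presupposes $\gamma(0)\neq\gamma(1)$, the chord direction being otherwise undefined.

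Next I would treat the case $\gamma'(0)\parallel 1$. Fixing an arbitrary interior breakpoint $p\in(0,1)$, I use the associated decomposition maps $\check A=\check A_p,\ \hat A=\hat A_p,\ \check b,\ \hat b$. Differentiating \eqref{eq1_sigma} at $t=0$ (this is \eqref{eqdt_sigma}) and using $\check\sigma(0)=0$ yields $\gamma'(0)\,\check\sigma'(0)=\check A\gamma'(0)$, so $\gamma'(0)$ --- equivalently the real direction $1$ --- is an eigenvector of the real matrix $\check A$, with eigenvalue the positive number $\check\sigma'(0)$. Hence $\check A\cdot 1$ is a real multiple of $1$, i.e.\ $\check A\cdot 1\in\mathbb{R}$. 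I then invoke Corollary \ref{corsigma}(3), which gives $\check A\cdot 1=\hat b=\tfrac{1}{2}(\gamma(p)+1)$; therefore $\tfrac{1}{2}(\gamma(p)+1)\in\mathbb{R}$, forcing $\gamma(p)\in\mathbb{R}$.

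Since $p\in(0,1)$ was arbitrary, and $\gamma(0),\gamma(1)$ are already real, the entire image $\gamma([0,1])$ lies on the real axis. A regular smooth curve into $\mathbb{R}$ has nonvanishing scalar derivative, hence is strictly monotone; together with $\gamma(0)=-1<1=\gamma(1)$ this makes $\gamma$ an increasing parametrization of the segment $[-1,1]$, which is the assertion. The remaining case $\gamma'(1)\parallel 1$ is symmetric: differentiating \eqref{eq1^sigma} at $t=1$ (equation \eqref{eqdt^sigma}) and using $\hat\sigma(1)=1$ shows $1$ is an eigenvector of $\hat A$, so $\hat A\cdot 1\in\mathbb{R}$; by Corollary \ref{corsigma}(3), $\hat A\cdot 1=-\check b=\tfrac{1}{2}(1-\gamma(p))$, again giving $\gamma(p)\in\mathbb{R}$ for every $p$, and the same conclusion follows.

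The point requiring the most care is the eigenvector step: reading ``parallel to the chord'' as ``the real coordinate direction is an eigenvector of the real $2\times 2$ matrix $\check A$'' needs $\gamma'(0)\neq 0$ (regularity) and the fact that $\check\sigma$ is an increasing homeomorphism, so that $\check\sigma'(0)$ is a genuine real eigenvalue rather than, say, part of a complex conjugate pair. The real leverage is the rigid identity $\check A\cdot 1=\hat b=\tfrac{1}{2}(\gamma(p)+1)$ supplied by Corollary \ref{corsigma}, valid for \emph{every} $p$, which upgrades a single boundary-tangency hypothesis into the global statement that all of $\gamma$ stays on the chord line.
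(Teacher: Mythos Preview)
Your proof is correct and takes a genuinely different---and cleaner---route than the paper. The paper treats the case $\gamma'(1)\parallel 1$ and differentiates \eqref{eq1_sigma} at $t=1$, where $\check\sigma(1)=p$; this yields $\gamma'(p)\,\check\sigma'(1)=\check A\gamma'(1)=\tfrac{v}{2}(1+\gamma(p))$, and by comparing gradients the paper extracts the ODE $\frac{dy}{dx}=\frac{y}{x+1}$, solves it to get $|y|=C|x+1|$, and uses the boundary value at $p=1$ to force $C=0$. You instead evaluate at the \emph{fixed point} of the contraction ($t=0$ for $\check\sigma$, or $t=1$ for $\hat\sigma$), so that the differentiated relation reads $\gamma'(0)\,\check\sigma'(0)=\check A\gamma'(0)$ and directly exhibits the real direction as an eigenvector of the real matrix $\check A$; combined with Corollary~\ref{corsigma}(3) this gives $\tfrac12(\gamma(p)+1)=\check A\cdot 1\in\mathbb{R}$ immediately, with no ODE to solve. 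Your approach sidesteps the mild technical issues in the paper's argument (dividing by $y$ when separating variables, and implicitly assuming $x'(p)\neq 0$ so that $dy/dx$ makes sense), at the cost of nothing. One small quibble: you call $\check\sigma'(0)$ ``positive'', but a smooth increasing homeomorphism can have zero derivative at an endpoint; this is harmless for your argument, since $\check A\cdot 1=\check\sigma'(0)\cdot 1\in\mathbb{R}$ holds regardless.
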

\begin{proof}
    We may assume that $-\gamma(0)=\gamma(1)=1\in\mathbb{R}$ modulo $\mathrm{Aff}(\mathbb C)$. 
    If $\gamma'(1)$ is parallel to $ \gamma(1)-\gamma(0)=2$, one can denote $\gamma'(1)=v\in\mathbb R$. 
    Then, by substituting $t=1$ and $\check \sigma  (1)=p $ to \eqref{eqdt_sigma}, we have
    \begin{align}
        \gamma'(p )\frac{\partial \check \sigma  (t)}{\partial t}\Big|_{t=1}&=\check A  \gamma'(1)=\check A  v=\frac{v}{2}(1+\gamma(p )) 
    \end{align}
    by Corollary \ref{corsigma}. 
    If $\frac{\partial \check \sigma  (t)}{\partial t}\Big|_{t=1}=0$, then we have $\gamma(p)=-1$. 
    Otherwise, taking gradients (the ratio of $x$- and $y$-coordinates) of both sides, we obtain 
    \begin{align}
        \frac{dy}{dx}(p)&=\frac{y(p)}{x(p)+1},
        \end{align}
        which implies 
        \begin{align}
        |y(p)|&=C|x(p)+1|,
    \end{align}
    where $\gamma(p )=x+\imaginary y=x(p )+\imaginary y(p )$ and $C$ is an arbitrary constant. 
    Substituting $p=1$, we get $C=0$. 
    Conversely, the line segment $[-1,1]$ obviously satisfies the assumption of the proposition. 
\end{proof}

Hereafter we assume that $-\gamma(0)=\gamma(1)=1$, $\gamma'(1)=\imaginary$ modulo $\mathrm{Aff}(\mathbb{C})$. 
By Corollary \ref{corsigma}, we may define $\check\alpha(p) ,\check\beta(p) ,\hat\alpha(p) ,\hat\beta(p) \in\mathbb{R}$ so that 
\begin{align}
    \check A =\begin{pmatrix}\frac{1}{2}(1+x(p )) &\check\alpha(p) \\\frac{1}{2}y(p )&\check\beta(p)  \end{pmatrix},\ \hat A= \begin{pmatrix}\frac{1}{2}(1-x(p )) &\hat\alpha(p) \\-\frac{1}{2}y(p )&\hat\beta(p)  \end{pmatrix}. 
\end{align}
Taking gradients of each sides of \eqref{eqdt_sigma} and \eqref{eqdt^sigma}, denoting $z(t)=\frac{dy}{dx}(t)$, we have
\begin{align}
    z(\check \sigma  (t))&=\frac{y(p )x'(t)+2\check\beta (p)  y'(t)}{(1+x(p )) x'(t)+2\check\alpha (p)  y'(t)}=M_{\check A }z(t),\\ 
    z(\hat\sigma  (t))&=\frac{-y(p )x'(t)+2\hat\beta (p)  y'(t)}{(1-x(p )) x'(t)+2\hat\alpha (p)  y'(t)}=M_{\hat A }z(t).
\end{align}
Substituting $t=0,1$, we have
\begin{align}
    z_0:=z(0)&=M_{\check A }z(0)=\frac{y(p)+2\check\beta (p)  z_0}{1+x(p)+2\check\alpha (p)  z_0}, \label{eqz_0}\\
    z(p )&=M_{\check A }z(1)=\frac{y(p)x'(0)+2\check\beta (p)  y'(0)}{(1+x(p))x'(0)+2\check\alpha (p)  y'(0)}=\frac{\check\beta (p) }{\check\alpha (p) }, \label{eqz_1}\\
    z(p )&=M_{\hat A }z(0)=\frac{-y(p)+2\hat\beta (p)  z_0}{1-x(p)+2\hat\alpha (p)  z_0}. \label{eqz^0}
\end{align}
In addition, we have
\begin{align}
    z(1)&=M_{\hat A }z(1)=\frac{-y(p)x'(0)+2\hat\beta (p)  y'(0)}{(1-x(p))x'(0)+2\hat\alpha (p)  y'(0)}=\frac{\hat\beta (p) }{\hat\alpha (p) },\label{eqz^1}
\end{align}
which implies $\hat\alpha (p) =0$ by $\gamma'(1)=\imaginary$.

\begin{proof}[Proof of Theorem \ref{HSAparabola}]
    Suppose that $\gamma$ possesses the HSA and is not a line. 
    Proposition \ref{deformation_circle} yields
    \begin{align}
        \kappa_\gamma(\check \sigma  (t))&=\frac{\mathrm{det}\check A }{|\check A \gamma'(t)|^3}\kappa_\gamma(t),
        \label{eqkappa_sigma}
        \\
        \kappa_\gamma(\hat\sigma  (t))&=\frac{\mathrm{det}\hat A }{|\hat A \gamma'(t)|^3}\kappa_\gamma(t).
        \label{eqkappa^sigma}
    \end{align}
    Substituting $t=0,1$ into \eqref{eqkappa_sigma} and \eqref{eqkappa^sigma}, we have
    \begin{align}
        \label{eqkappa_sigma0}
        \kappa_\gamma(0)&=\frac{\mathrm{det}\check A }{|\check A \gamma'(0)|^3}\kappa_\gamma(0)
        ,\\
        \label{eqkappa_sigma1}
        \kappa_\gamma(p )&=\frac{\mathrm{det}\check A }{|\check A \gamma'(1)|^3}\kappa_\gamma(1)=\frac{\check\beta (p)  (1+x(p ))-\check\alpha (p)  y(p )}{2|\check\alpha (p) +\imaginary\check\beta (p) |^3}\kappa(1),\\
        \label{eqkappa^sigma0}
        \kappa_\gamma(p )&=\frac{\mathrm{det}\hat A }{|\hat A \gamma'(0)|^3}\kappa_\gamma(0)
        ,
        \\
        \label{eqkappa^sigma1}
        \kappa_\gamma(1)&=\frac{\mathrm{det}\hat A }{|\hat A \gamma'(1)|^3}\kappa_\gamma(1)=
        \frac{\hat\beta (p) (1-x(p ))}{2|\hat\beta (p) |^3}\kappa(1).
    \end{align}
    Here we have used $\hat{\alpha}(p)=0$. 
    First, we show that none of the following occurs: 
    \begin{enumerate}
        \item $\check A \not \in GL(2,\mathbb{R})$ for any $p \in I$.
        \item $\hat A \not \in GL(2,\mathbb{R})$ (equivalently $\hat\beta (p) =0$ or $x(p )=1$) for any $p \in I$.
        \item $\kappa_\gamma(0)=0$.
        \item $\kappa_\gamma(1)=0$.
    \end{enumerate}
    We show that any of the above implies that $\gamma$ is a line segment, which contradicts 
    the assumption. 
        It follows from (1) that $z(p)={\check\beta (p) }/{\check\alpha (p) }={y(p)}/{(x(p)+1)}$, and the discussion in the proof of Proposition \ref{bdryrigidity} works. 
    If (2) holds, then \eqref{eqdt^sigma} at $t=1$ implies that $\gamma'(p )=0$ for any $p$.   
    If (3) ((4), respectively) holds, then \eqref{eqkappa^sigma0} (\eqref{eqkappa_sigma1}, respectively) implies that $\kappa_\gamma(p )=0$ unless (1) ((2), respectively).

    Next, it follows from \eqref{eqkappa^sigma1} and $\kappa_\gamma(1)\neq0$ that 
    \begin{align}\label{eqkappa1ratio}
        \frac{\mathrm{det}\hat A }{|\hat A \gamma'(1)|^3}&=\frac{\hat\beta (p) (1-x(p ))}{2|\hat\beta (p) |^3}=1,
    \end{align}
    which yields
    \begin{align}\label{hatbeta}
        \hat\beta (p) &=\beta(x(p )):=\begin{cases}\medskip
            -\sqrt{\frac{1}{2}{(x(p )-1)}}&\text{ if }x(p )>1,\\
            \sqrt{\frac{1}{2}({1-x(p )})}&\text{ if }x(p )\leq 1.
        \end{cases}
    \end{align}
    Note that we used the fact that sign$(\hat\beta (p) )$ equals to sign$(1-x(p ))$ which follows from \eqref{eqkappa1ratio}.
    Compared with \eqref{eqz^0}, if $x(p)\neq 1$ we have 
    \begin{align}
        z(p )=\frac{-y(p)+2z_0\hat\beta (p) }{1-x(p)},
    \end{align}
    or
    \begin{align}
        \frac{dy}{dx}+\frac{y}{1-x}=\frac{2z_0\beta(x)}{1-x}. 
        \label{diffeq}
    \end{align}
    Solving \eqref{diffeq} by a standard technique, we obtain 
    \begin{align}\label{affparabola}
        y(x)&=\begin{cases}\medskip
            {-2\sqrt{2}z_0}{\sqrt{x-1}}+C_+(x-1)&\text{ if }x(p )>1,\\
            {2\sqrt{2}z_0}{\sqrt{1-x}}+C_-(1-x)&\text{ if }x(p )<1,
        \end{cases}     
    \end{align}
    where $C_+,C_-$ are arbitrary constants. Figure \ref{fig:graphHSA} shows the graph of \eqref{affparabola}. 
        \begin{figure}[h]\label{fig:graphHSA}
        \includegraphics[height=80mm]{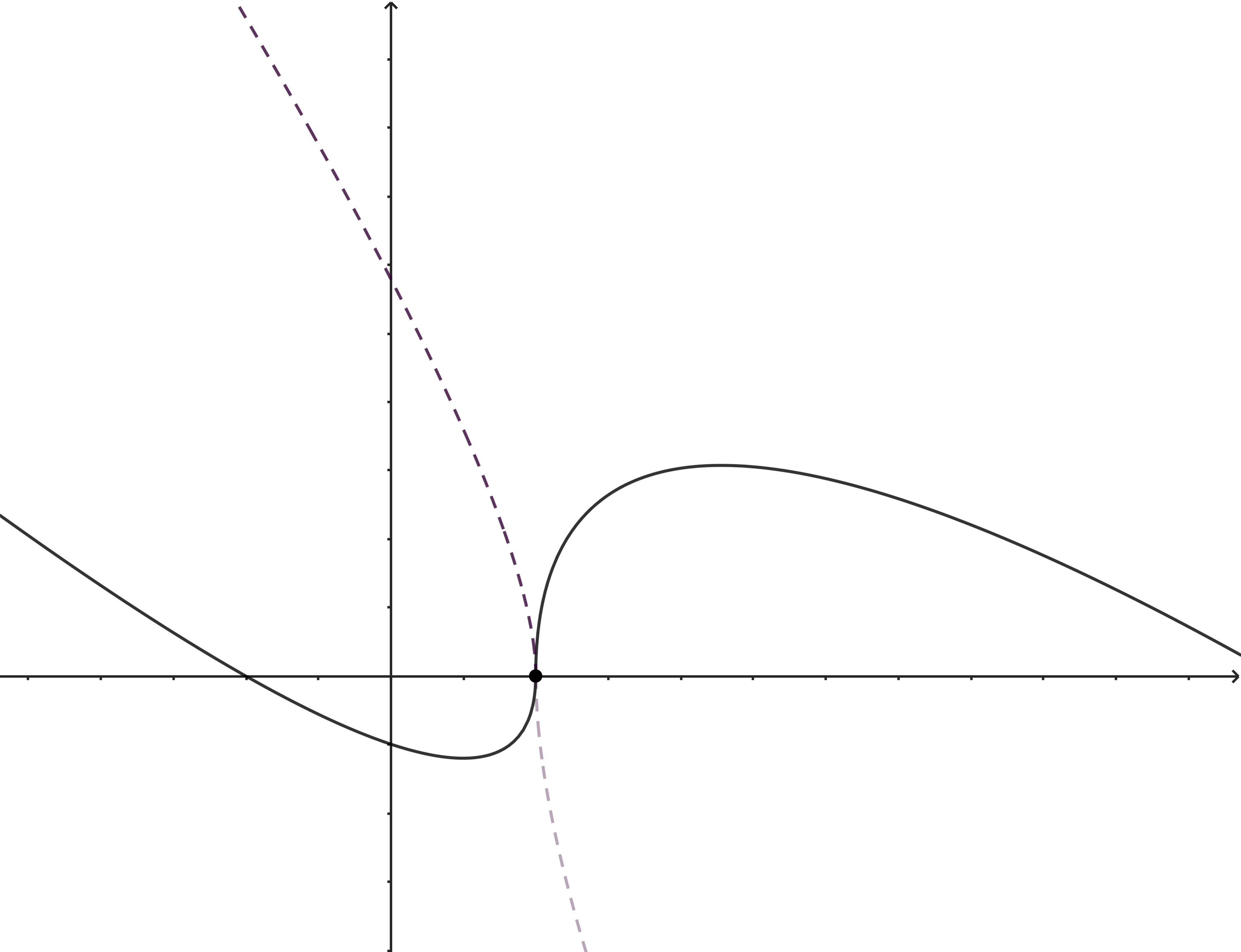}
        \caption{the parabolas obtained from the HSA
        }
        \end{figure}  
    We remark that these two parabolas arise from \eqref{hatbeta} separately, and so that each parabola possesses the HSA independently. 
    The combined curve does not possess the HSA. 
    For any isolated $p$ with $x(p)=1$, we have $\gamma(p)=(1,0)$ by taking limits of $\eqref{affparabola}$ as $x\rightarrow1$. 
    Thus a curve with the HSA should be either a line or the parabola up to affine deformations. 
    

    Conversely, the parabola $P(t)=(t,t^2)$ $(0\leq t \leq1)$ posseses the HSA.
    In fact, for any $0\leq t_0\leq t_1\leq 1$, consider 
    \begin{align}
    \sigma(t):=(t_1-t_0)t+t_0,\ t\in[t_0,t_1].  
    \end{align}
    Then, $\sigma=\sigma(t)$ runs monotonously through $[t_0, t_1]$.
    The trivial formula 
    \begin{align}\label{HSAparabolaeq}
        \begin{pmatrix}
            \sigma\\\sigma^2
        \end{pmatrix}&=
        \begin{pmatrix}
            t_1-t_0&0\\2t_0(t_1-t_0)&(t_1-t_0)^2
        \end{pmatrix}
        \begin{pmatrix}
            t\\ t^2
        \end{pmatrix}+
        \begin{pmatrix}
            t_0\\ t_0^2
        \end{pmatrix}
    \end{align}
    yields that the curve $P_{[t_0,t_1]}(\sigma)=(\sigma,\sigma^2),\ \sigma\in[t_0,t_1]$ is a subcurve of $P(t)$ and an affine deformation of $P(t)$. Thus the parabola $P$ possesses the HSA. 
    \if0
    The parabola $y=h_{z_0}(x)$ satisfies the affine equation 
    \begin{align}
        \begin{pmatrix}
            x\\h_{z_0}(x)
        \end{pmatrix}&=
        \begin{pmatrix}
            0&-1\\2\sqrt{2}z_0&-2z_0
        \end{pmatrix}
        \begin{pmatrix}
             \sqrt{1-x} \\ 1-x
        \end{pmatrix}+
        \begin{pmatrix}
            -1\\2z_0
        \end{pmatrix}. 
    \end{align}
    Combined with the reparametrization $t=\frac{1}{4}(1-x)^2$, we see that affine deformations of this parabola possess the HSA by Remark \ref{remHSA}. 
    \fi
    
    The description of parabolas as quadratic curves \cite{CRCtable} shows that an arbitrary parabola is an affine deformation of the parabola $P$, so that it possesses the HSA by Remark \ref{remHSA}. 
    The claim for lines is obvious. 
    This completes the proof. 
\end{proof}

\section{Extendable self-affinity}

The result in Section \ref{subsec:HSA} may suggest that the other quadratic curves are characterized by some sort of self-affinity. 
We will discuss this point in the following. 
We first introduce a new self-affinity that generalizes the MSA and the HSA. 
\begin{definition}
    A curve $\gamma(s):[0,s_{\mathrm{all}}]\rightarrow\mathbb{C}$ possesses the \textit{extendable self-affinity (the ESA)} with respect to  a reparametrization $t=t(s)$ and a Lie group $G$ if there exists a supercurve $\tilde{\gamma}(s):[0,\tilde s_{\mathrm{all}}]\rightarrow\mathbb{C}$ of $\gamma$ and a differentiable map $F_\varepsilon:\mathbb{R}\rightarrow G$ such that for any $t,\varepsilon$ with $t, t+\varepsilon\in t([0,\tilde s_{\mathrm{all}}])$,
\begin{equation}
    \tilde\gamma(t+\varepsilon)=F_\varepsilon\tilde\gamma(t).
\end{equation}
\end{definition}
The MSA \eqref{MSA} can be regarded as the ESA with respect to the group of transition maps between the original curve and the curve whose curvature and line element are deformed by $(\kappa(t),ds(t))\mapsto (a\kappa(t),b \,ds(t))$, $a,b>0$. 
In fact, the transition map is given by the collection of homeomorphisms on $\mathbb{C}$ of the form $p \mapsto\phi_{a,b}\placket{ b\phi_{1,1}^{-1}(p)}$, where 
\begin{align}
    \phi_{a,b}(s)=\int_{0}^{\frac{s}{b}}\exp\placket{\imaginary\int_{0}^{\frac{s}{b}}a\kappa\placket{\frac{s}{b}}ds}ds:[0,bs_{\mathrm{all}}]\rightarrow \mathbb{C}.
\end{align}

The HSA \eqref{HSAeq} is the ESA with respect to a subgroup in $\mathrm{Aff}(\mathbb{C})$.
In the latter part of the proof of Theorem \ref{HSAparabola}, the fact that the parabola segment $P_{[t_0,t_1]}(\sigma)$ is an affine deformation of the parabola $P(t)$ is true not only for $0\leq t_0<t_1 \leq 1$ but for arbitrary $t_0<t_1$. 
In other words, we can define a unique extension of $P(t)$, $t\in[0,1]$ under the HSA.

Next, as a generalization of the HSA, we prove that the ESA characterizes the constant curvature curves in the \textit{equiaffine geometry} \cite{Equiaffine}. 
We say that a reparametrization $t$ of a curve is \textit{equiaffine parameterization} if $\det(\gamma_t,\gamma_{tt})=1$. 

We introduce the equiaffine frame $\Phi^{\mathrm{SA}}:=(\gamma_t,\gamma_{tt})\in SL(2,\mathbb{R})$ of an equiaffine parametrized curve $\gamma(t)$. 
The fundamental theorem of curves in equiaffine geometry \cite{Equiaffine} states that 
for a given non-negative, smooth function $\kappa^\mathrm{SA}(t):I\rightarrow \mathbb{C}$, 
the \textit{equiaffine Frenet formula} 
\begin{align}\label{SAFrenet}
    \Phi_t^{\mathrm{SA}}&=\Phi^{\mathrm{SA}}\begin{pmatrix}
        0&-\kappa^{\mathrm{SA}}\\1&0
    \end{pmatrix}
\end{align}
has a unique solution $\gamma(t):I\rightarrow \mathbb{C}$ 
 up to the \textit{equiaffine transformation group} $G^{\mathrm{SA}}:=\{z\mapsto Az+b\mid A\in SL(2,\mathbb{R}),\ b\in\mathbb{C}\}$. 

\begin{theorem}\label{extSA}
    A curve possesses the ESA with respect to the equiaffine parameterization and the equiaffine transformation group $G^{\mathrm{SA}}$ if and only if it is either a parabola, an ellipse, or a hyperbola. 
\end{theorem}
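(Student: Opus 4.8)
The plan is to translate the ESA condition into a statement about the equiaffine Frenet frame $\Phi^{\mathrm{SA}}=(\gamma_t,\gamma_{tt})$, recognize the resulting structure as a one-parameter subgroup of $SL(2,\mathbb{R})$, and read off constancy of the equiaffine curvature $\kappa^{\mathrm{SA}}$; the classification of constant-curvature curves in equiaffine geometry then closes the argument. For the ``only if'' direction I would start from the supercurve relation $\tilde\gamma(t+\varepsilon)=A_\varepsilon\tilde\gamma(t)+b_\varepsilon$ with $A_\varepsilon\in SL(2,\mathbb{R})$ and $F_\varepsilon$ differentiable. Differentiating twice in $t$ kills the translation and yields the frame identity $\Phi^{\mathrm{SA}}(t+\varepsilon)=A_\varepsilon\Phi^{\mathrm{SA}}(t)$, reading $\gamma_t,\gamma_{tt}$ as the columns of a real $2\times2$ matrix. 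Since the equiaffine parametrization forces $\det\Phi^{\mathrm{SA}}\equiv1$, the frame is invertible and $A_\varepsilon=\Phi^{\mathrm{SA}}(t+\varepsilon)\,[\Phi^{\mathrm{SA}}(t)]^{-1}$ is independent of $t$. Setting $\Psi(t):=\Phi^{\mathrm{SA}}(t)\,[\Phi^{\mathrm{SA}}(0)]^{-1}$, one checks $\Psi(0)=I$ and the cocycle relation $\Psi(t+\varepsilon)=\Psi(\varepsilon)\Psi(t)$; differentiating in $\varepsilon$ at $\varepsilon=0$ gives $\Psi'(t)=B\Psi(t)$ with $B:=\Psi'(0)$ and $\mathrm{tr}\,B=0$, hence $\Psi(t)=\exp(tB)$ and $\Phi^{\mathrm{SA}}(t)=\exp(tB)\,\Phi^{\mathrm{SA}}(0)$.

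The payoff is then immediate. Differentiating gives $\Phi^{\mathrm{SA}}_t=B\,\Phi^{\mathrm{SA}}$, while the equiaffine Frenet formula \eqref{SAFrenet} reads $\Phi^{\mathrm{SA}}_t=\Phi^{\mathrm{SA}}K(t)$ with $K(t)=\begin{pmatrix}0&-\kappa^{\mathrm{SA}}(t)\\1&0\end{pmatrix}$. Equating the two shows $K(t)=[\Phi^{\mathrm{SA}}(t)]^{-1}B\,\Phi^{\mathrm{SA}}(t)$ is a conjugate of the constant matrix $B$, so $\kappa^{\mathrm{SA}}(t)=\det K(t)=\det B$ is constant. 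It is classical in equiaffine geometry \cite{Equiaffine} that a curve has constant equiaffine curvature if and only if it is a quadratic curve, the sign of $\kappa^{\mathrm{SA}}$ distinguishing a parabola ($=0$), an ellipse ($>0$), and a hyperbola ($<0$); this yields one implication.

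For the converse, let $\gamma$ be a quadratic curve, so $\kappa^{\mathrm{SA}}\equiv c$ and \eqref{SAFrenet} has the constant coefficient matrix $C=\begin{pmatrix}0&-c\\1&0\end{pmatrix}$, giving $\Phi^{\mathrm{SA}}(t)=\Phi^{\mathrm{SA}}(0)\exp(tC)$ defined for all $t\in\mathbb{R}$; this global solution is the required supercurve $\tilde\gamma$. Putting $A_\varepsilon:=\Phi^{\mathrm{SA}}(0)\exp(\varepsilon C)\,[\Phi^{\mathrm{SA}}(0)]^{-1}\in SL(2,\mathbb{R})$, the commutativity of powers of $C$ makes $A_\varepsilon$ independent of $t$ and gives $\Phi^{\mathrm{SA}}(t+\varepsilon)=A_\varepsilon\Phi^{\mathrm{SA}}(t)$. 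Integrating the first-column relation $\tilde\gamma_t(t+\varepsilon)=A_\varepsilon\tilde\gamma_t(t)$ produces $\tilde\gamma(t+\varepsilon)=A_\varepsilon\tilde\gamma(t)+b_\varepsilon$ with $b_\varepsilon=\tilde\gamma(\varepsilon)-A_\varepsilon\tilde\gamma(0)$, and $F_\varepsilon\colon z\mapsto A_\varepsilon z+b_\varepsilon$ is differentiable in $\varepsilon$. This is exactly the ESA with respect to the equiaffine parametrization and $G^{\mathrm{SA}}$.

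I expect the central obstacle to be the structural step that turns the cocycle relation into the one-parameter subgroup $\Psi(t)=\exp(tB)$: this is where differentiability of $F_\varepsilon$ and invertibility of the frame must be used carefully, and where the conjugation-invariance of the determinant delivers constancy of $\kappa^{\mathrm{SA}}$. Attention is also needed to the left-versus-right placement of $B$ and $K$ relative to $\Phi^{\mathrm{SA}}$, and to confirming that a genuine supercurve exists on which the relation holds for all admissible $t,t+\varepsilon$.
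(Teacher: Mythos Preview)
Your proof is correct and takes a somewhat more structural route than the paper. Both arguments start by differentiating the ESA relation in $t$ to obtain the frame identity $\Phi^{\mathrm{SA}}(t+\varepsilon)=A_\varepsilon\Phi^{\mathrm{SA}}(t)$ (equivalently, $\Phi^{\mathrm{SA}}_t=B\,\Phi^{\mathrm{SA}}$ with constant $B$ after one $\varepsilon$-derivative). From there the paper simply equates this with the Frenet relation $\Phi^{\mathrm{SA}}_t=\Phi^{\mathrm{SA}}K(t)$ and differentiates the combined identity $\Phi^{\mathrm{SA}}K=B\,\Phi^{\mathrm{SA}}$ once more in $t$; the two occurrences of $B\,\Phi^{\mathrm{SA}}K$ cancel and one is left with $\Phi^{\mathrm{SA}}K_t=0$, hence $K_t=0$. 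You instead package $\Psi(t)=\Phi^{\mathrm{SA}}(t)[\Phi^{\mathrm{SA}}(0)]^{-1}$ as a one-parameter subgroup of $SL(2,\mathbb{R})$, solve $\Psi'=B\Psi$ as $\Psi=\exp(tB)$, and then read off $K(t)=[\Phi^{\mathrm{SA}}]^{-1}B\,\Phi^{\mathrm{SA}}$ so that $\kappa^{\mathrm{SA}}=\det K=\det B$ is constant by conjugation-invariance. The paper's route is slightly shorter and avoids any Lie-theoretic language; your route is more conceptual and, importantly, pays off in the converse: whereas the paper verifies the ESA case by case via the addition formulas for $\cos/\sin$ and $\cosh/\sinh$ (and defers the parabola to Theorem~\ref{HSAparabola}), your uniform construction $A_\varepsilon=\Phi^{\mathrm{SA}}(0)\exp(\varepsilon C)[\Phi^{\mathrm{SA}}(0)]^{-1}$, $b_\varepsilon=\tilde\gamma(\varepsilon)-A_\varepsilon\tilde\gamma(0)$ handles all three signs of $\kappa^{\mathrm{SA}}$ at once and makes the supercurve and the differentiability of $\varepsilon\mapsto F_\varepsilon$ explicit. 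One small remark: the exponential solution $\Psi=\exp(tB)$ is not strictly needed for the forward direction, since $\Psi'=B\Psi$ already gives $\Phi^{\mathrm{SA}}_t=B\,\Phi^{\mathrm{SA}}$ directly; but it does no harm and aligns nicely with your converse argument.
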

\begin{proof}
Let a curve $\gamma$ possess the ESA with respect to $G^{\mathrm{SA}}$. 
Then, there exists a reparametrization $t$ and $F_\varepsilon:\mathbb{R}\rightarrow G^{\mathrm{SA}}$ such that for any $t,\varepsilon$,
\begin{align}
    \gamma(t+\varepsilon)=F_\varepsilon\gamma(t).
\end{align}
Differentiating by $t$, we have
\begin{align}
    \gamma_t(t+\varepsilon)=F_\varepsilon\gamma_t(t).
\end{align}
Taking $\varepsilon$-differentials at $\varepsilon=0$, we have the following: 
\begin{align}
    \lim_{\varepsilon\rightarrow0}\placket{ \frac{\gamma(t+\varepsilon)-\gamma(t)}{\varepsilon},\frac{\gamma_t(t+\varepsilon)-\gamma_t(t)}{\varepsilon}}&=\lim_{\varepsilon\rightarrow0}\frac{F_\varepsilon-I}{\varepsilon}\Phi^{\mathrm{SA}}(t),
    \end{align}
which implies that
\begin{align}
    \Phi^{\mathrm{SA}}_t&=F\Phi^{\mathrm{SA}},
\end{align}
where $F=\frac{d}{d\varepsilon}F_\varepsilon\mid_{\varepsilon=0}$. 
Compared with \eqref{SAFrenet}, we have 
\begin{align}\label{ESA}
    \Phi_t=\Phi\begin{pmatrix}
        0&-\kappa^{\mathrm{SA}}\\1&0
        \end{pmatrix}&=F\Phi
\end{align}
Differentiating by $t$ and applying \eqref{ESA} yields 
\begin{align}
    \Phi\begin{pmatrix}
        0&-\kappa^{\mathrm{SA}}_t\\0&0
        \end{pmatrix}+F\Phi\begin{pmatrix}
        0&-\kappa^{\mathrm{SA}}\\1&0
        \end{pmatrix}&=F\Phi\begin{pmatrix}
        0&-\kappa^{\mathrm{SA}}\\1&0
        \end{pmatrix},
\end{align}
from which we obtain 
\begin{align}
    \Phi \begin{pmatrix}
        0&-\kappa^{\mathrm{SA}}_t\\0&0
        \end{pmatrix}&=0. 
\end{align}
By the assumption of equiaffine parameterization $t$ that $\Phi$ is regular, we conclude that the equiaffine curvature $\kappa^{\mathrm{SA}}$ is constant. 
By solving \eqref{SAFrenet} for constant $\kappa^{\mathrm{SA}}$, we see that $\gamma(t)$ is either a parabola ($\kappa^{\mathrm{SA}}=0$), an ellipse ($\kappa^{\mathrm{SA}}>0$), or a hyperbola ($\kappa^{\mathrm{SA}}<0$). 
    
Conversely, it follows from the addition theorem that 
\begin{align}\label{SAE}
        \begin{pmatrix}
            A\cos(t+\varepsilon)\\B\sin(t+\varepsilon)
        \end{pmatrix}&=
        \begin{pmatrix}
            \cos\varepsilon&-\frac{A}{B}\sin\varepsilon\\\frac{B}{A}\sin\varepsilon&\cos\varepsilon
        \end{pmatrix}
        \begin{pmatrix}
            A\cos t\\B\sin t
        \end{pmatrix}, \\
        \begin{pmatrix}
            A\cosh(t+\varepsilon)\\B\sinh(t+\varepsilon)
        \end{pmatrix}&=
        \begin{pmatrix}
            \cosh\varepsilon&\frac{A}{B}\sinh\varepsilon\\\frac{B}{A}\sinh\varepsilon&\cosh \varepsilon
        \end{pmatrix}
        \begin{pmatrix}
            A\cos t\\B\sin t
        \end{pmatrix}, 
    \end{align}
which implies the ESA of ellipses and hyperbolas, respectively. 
Together with Theorem \ref{HSAparabola}, the above completes the proof. 
\end{proof}

We note that Theorem \ref{MSALAC} in the case $\alpha=1$ implies that the ESA characterizes logarithmic spirals. 
They are the constant curvature curves in the \textit{similarity geometry} \cite{Inoguchi2018, Inoguchi2023}. 
Therefore, the results in this paper may be summarized as follows:
the constant curvature curves in similarity geometry and equiaffine geometry are captured by the common self-affinity as shown in Table \ref{table}. 

\begin{table}[htb]
\begin{center}
\begin{tabular}{|c|c|c|c|c|c|c|} \hline
   geometry& \multicolumn{3}{|c|}{similarity geometry}& \multicolumn{3}{|c|}{equiaffine geometry}   \\ \hline
   curvature& \centerbox{12mm}{0}  &\centerbox{12mm}{+}& \centerbox{12mm}{-}& \centerbox{12mm}{0} &\centerbox{12mm}{+}& \centerbox{12mm}{-}   \\ \hline
   curve& circle  &\multicolumn{2}{|c|}{logarithmic spiral}& parabola&ellipse& hyperbola   \\ \hline
   \multirow{2}{*}{self-affinity}& \multicolumn{3}{|c|}{MSA}& HSA &\multicolumn{2}{|c|}{}   \\ \cline{2-7}
   & \multicolumn{6}{|c|}{ESA}   \\ \hline
\end{tabular}
\caption{charactrization of constant curvature curves in similarity geometry and equiaffine geometry by self-affinities: the MSA and the HSA are included in the ESA}\label{table}
\end{center}
\end{table} 

\section{Concluding remarks}
In this paper, we have given rigorous definitions of the HSA and the MSA for planar curves, which have been proposed as properties to characterize curves that car designers regard as \textit{aesthetic}. 
Then, we have proved that 
\begin{itemize}
    \item a curve with the MSA is either a line, a circle, or a LAC,
    \item a curve with the HSA is either a line or a parabola, 
    \item a curve with the ESA in equiaffine geometry is either a parabola, an ellipse, or a hyperbola. 
\end{itemize}
With the notion of the ESA, the first two results intersect by one statement that a curve with the ESA in similarity geometry or equiaffine geometry has constant curvature. 

We intend to find a generalization of LACs to spatial curves and surfaces that reflects several properties of planar LACs. 
In addition to the MSA, LACs are known to have two other characterizations related to geometric shape generation. 
It is shown in \cite{Inoguchi2018,Inoguchi2023} that LACs are formulated by a variational principle and an integrable evolution in similarity geometry. 
Though there are other generalizations of the MSA to spatial curves \cite{Gobi20142} and surfaces \cite{Miura2014}, relations to the above characterizations are yet to be discussed.

The observation in this paper may imply that an alternate class of \textit{``aesthetic''} curves different from LACs can be captured in equiaffine geometry via self-affinity. 
We aim to give further investigations in the forthcoming paper.

\subsection*{Acknowledgments}
    This work was supported by JST CREST Grant Number JPMJCR1911. 
    The authors would be grateful to Prof.\ Jun-ichi Inoguchi, Prof.\ Gobithaasan Rudrusamy, Prof.\ Kenjiro T.\ Miura, and Prof.\ Yoshiki Jikumaru for their insightful comments and continuous encouragement. 
\section*{Declarations}
\subsection*{Funding}
    This work was supported by JST CREST Grant Number JPMJCR1911.
\subsection*{Conflict of interest}
    The authors declare that there is no conflict of interest.
\subsection*{Ethics approval}
    Not applicable.
\subsection*{Consent to participate}
    Not applicable.
\subsection*{Consent for publication}
    The authors agree to publish this work.
\subsection*{Authors’ contribution}
    Shun Kumagai developed theoretical formalism and wrote the original draft.
    Kenji Kajiwara was in charge of overall direction, conceptualization, and editing.
    Both authors discussed and wrote the final manuscript.
\subsection*{Data availability}
    No data was used for the research described in the article. 
\subsection*{Code availability}
    Not applicable. 
\bibliographystyle{ieeetr}
\bibliography{reference.bib} 

\end{document}